\theoremstyle{plain}
\newtheorem{thm}{Theorem}
\newtheorem{lem}[thm]{Lemma}
\newtheorem{prop}[thm]{Proposition}
\newtheorem{problem}[thm]{Problem}
\theoremstyle{definition}
\newtheorem{dfn}[thm]{Definition}
\theoremstyle{remark}
\newtheorem{remark}[thm]{Remark}
\numberwithin{equation}{section}
\newcounter{tmp}
\newcommand{\Ci}{\mathscr{C}}
\newcommand{\Sone}{\mathbb{S}^1}
\newcommand{\N}{\mathbb{N}}
\newcommand{\R}{\mathbb{R}}
\newcommand{\Z}{\mathbb{Z}}
\newcommand{\esssup}{\operatorname*{ess\,sup}}
\title[Historic behaviour for nonautonomous contraction mappings]{Historic behaviour for nonautonomous contraction mappings}
\date{\today}
\author{Shin Kiriki}
\address[Shin Kiriki]{Department of Mathematics, Tokai University, 4-1-1 Kitakaname, Hiratuka, Kanagawa, 259-1292, JAPAN}
\email{kiriki@tokai-u.jp}
\author{Yushi Nakano}
\address[Yushi Nakano]{\color{black} Department of Mathematics, Tokai University, 4-1-1 Kitakaname, Hiratuka, Kanagawa, 259-1292, JAPAN\color{black}}
\email{\color{black} yushi.nakano@tsc.u-tokai.ac.jp\color{black}}
\author{Teruhiko Soma}
\address[Teruhiko Soma]{Department of Mathematical Sciences, Tokyo Metropolitan University, 1-1 Minami-Ohsawa, Hachioji, Tokyo, 192-0397, JAPAN}
\email{tsoma@tmu.ac.jp}
\subjclass[2010]{Primary 37C60; Secondary 37H99}
\keywords{Historic behaviour; nonautonomous dynamical system}
\begin{document}

\begin{abstract}
We consider a parametrised perturbation of a $\Ci^r$ diffeomorphism on a closed smooth Riemannian manifold  with $r\geq 1$, modeled by nonautonomous dynamical systems. A point without time averages for a (nonautonomous) dynamical system is said to have historic behaviour. It is known that for any $\Ci ^r$ diffeomorphism, the observability of historic behaviour, in the sense of the existence of a positive Lebesgue measure set consisting of points with historic behaviour, disappears under absolutely continuous, independent and identically distributed (i.i.d.) noise. 
On contrast, we show that the observability of historic behaviour can appear by a non-i.i.d.~noise: we consider a contraction mapping for which the set of points with historic behaviour is of zero Lebesgue measure and provide an absolutely continuous, non-i.i.d.~noise under which the set of points with historic behaviour is of positive Lebesgue measure.
\end{abstract}

\maketitle

\section{Introduction}
 This \color{black} paper concerns nonautonomous dynamical systems on a parametrised family of $\Ci ^r$ diffeomorphisms on a closed smooth Riemannian manifold $M$ with $r\geq 1$. 
 Given a mapping $\theta : \Omega \to \Omega$ on base set $\Omega$, a \emph{nonautonomous dynamical system} (abbreviated NDS henceforth) on $M$ over $\theta$ is given as a mapping $F:\mathbb N_0\times \Omega \times M\to M$ satisfying  $F(0,\omega ,\cdot )= \mathrm{id} _M$  for each $\omega \in \Omega$ and the cocycle property
\[
F(n+m , \omega ,x ) = F(n , \theta ^m \omega , F(m , \omega , x )) ,\quad n,m\in \mathbb N_0, \quad \omega \in \Omega ,\quad x\in M.
\]
Here $\theta \omega$ denotes the value $\theta (\omega )$, and $\theta $ is called  a \emph{driving system}. 
The notation of nonautonomous dynamical systems has emerged as an abstraction of 
  \emph{random dynamical systems} (see Remark  \ref{rmk:1} for the precise definition of random dynamical systems; a standard reference  is the monograph by Arnold  \cite{Arnold}, see also  \cite{BDV04, JKR2015} for representation of  Markov chains of random perturbations by random maps).
For general properties of NDS, we refer to Kloeden and Rasmussen \cite{KR2011}.
Here it is merely stated that if we denote $F(n,\omega , \cdot )$ and $F(1,\omega , \cdot )$ by $f^{(n) }_\omega $ and $f_\omega$, respectively,  then we have
\begin{equation}\label{eq:basic}
F(n,\omega ,\cdot )\equiv f^{(n) }_\omega = f_{\theta ^{n-1} \omega } \circ f_{\theta ^{n-2} \omega } \circ \cdots \circ f_\omega.
\end{equation}
Conversely, it is straightforward to see that given a mapping $f: \Omega \times M \to M$: $(\omega ,x)\mapsto f_\omega (x)$, 
a mapping $F: \mathbb N_0 \times \Omega \times M\to M$  defined by \eqref{eq:basic} 
is an NDS over $\theta$. We call it the \emph{NDS induced by $f$} over $\theta$.

A  naive expectation
from  \eqref{eq:basic} is that once we impose an appropriate condition on $\omega \mapsto f_\omega$, the statistical properties  of the driving system $\theta$ (with respect to a given probability measure $\mathbb P$ on $\Omega$) will be transmitted to those of  $\{ f^{(n)} _\omega \}_{n\geq 0}$ ($\mathbb P$-almost surely).
A celebrated result in the direction is established by Ara\'ujo \cite{Araujo2000} for historic behaviour  in the i.i.d.~case, 
 which inspires the work in this paper.
(For another result in the direction from the viewpoint of mixing property or limit theorems, refer to \cite{MK2006, NTW2016, AA2017} and the references therein.) 
 To state his and our result, we  define historic  behaviour for $F$.

 \begin{dfn}\label{dfn:hb}
For given $\omega \in \Omega$, we say that the forward orbit of $x\in M$  at $\omega$  
has \emph{historic  behaviour} if there exists a continuous function $\varphi :M\to \R$ for which the time average
\begin{equation}\label{eq:hb1}
\lim _{n\to \infty}\frac{1}{n} \sum _{j=0} ^{n-1} \varphi (f^{(j)} _{\omega} (x))
\end{equation}
does not exist. 
For short, we call $x$ a point with historic behaviour at $\omega$.
\end{dfn}

The  concept of historic behaviour was  introduced by Ruelle \cite{Ruelle2001} for autonomous dynamical systems: Let $f_0 : M\to M$ be a $\Ci ^r$ diffeomorphism on $M$ and  $f_0^n$ the usual $n$-th iteration of $f_0$ with $n\geq 0$. Then, $\mathbb N_0\times M\ni (n, x) \mapsto f_0^n(x)$ is an (autonomous) dynamical system, and 
a point  $x\in M$  is said to have
\emph{historic  behaviour} if there exists a continuous function $\varphi :M\to \R$ for which the time average
$
\lim _{n\to \infty}\frac{1}{n} \sum _{j=0} ^{n-1} \varphi (f_0^j (x))
$
does not exist.
Since several statistical quantities are given as the time average of some function $\varphi$, 
 it is   natural to investigate the observability of historic behaviour in the sense of the existence of  a positive Lebesgue measure set consisting of point with  historic behaviour. \color{black}
In the autonomous situation, Bowen's famous folklore example  \cite{Takens1994} tells 
that there is a $\Ci ^\infty$ diffeomorphism on a  compact surface for which the set of points with historic behaviour is of positive Lebesgue measure. 
However, his example was not stable under small perturbations.
Hence, Takens asked in \cite{Takens2008} 
 whether there is a persistent class of $\Ci ^r$ diffeomorphisms for which the set of points with historic behaviour is of positive Lebesgue measure (called \emph{Takens' Last Problem}):
Very recently it was affirmatively answered  by the first and third authors  in \cite{KS2017}, that will be briefly restated (in a slightly stronger form) in Theorem \ref{thm:Newhouse}.
Furthermore, this was applied to detect a persistent class of 3-dimensional flows having  a positive Lebesgue measure set consisting of points with historic behaviour  in \cite{LR2016}. 
The reader is asked to see   \cite{Takens1994, Ruelle2001, Takens2008, BDV04} for the background of historic  behaviour in the autonomous situation.
 
In the nonautonomous situation, the first result about historic behaviour  was obtained by   Ara\'ujo   for parametrised perturbations of $\Ci ^r$ diffeomorphisms under i.i.d.~noise:
  if a parametrised perturbation of a $\Ci ^r$ diffeomorphism $f_0$ given as an i.i.d.~NDS $F$ is \emph{absolutely continuous}, then the set of  points with historic behaviour is a zero Lebesgue measure set 
(see Appendix \ref{appendix:araujo}  for the definition of absolute continuous i.i.d.~NDS's and the precise statement of Ara\'ujo's theorem). 
We can choose  the unperturbed system $f_0$ as a $\Ci ^r$ diffeomorphism for which the set of points with historic behaviour is persistently of positive Lebesgue measure, that means the disappearance of  historic behaviour under i.i.d.~noise (although the existence of a residual set consisting of points with historic behaviour for expanding maps is preserved under any random perturbations, refer to  \cite{Nakano2017}).   
Our purpose in this paper is to show the appearance of historic behaviour under some ``historic'' noise.

 \subsection{Setting and result}\label{subsection:setting}
Let $M$ be the circle given by $M =\R /\Z$. We endow $M$ with a metric $d_{M}(\cdot ,\cdot)$, where $d_{M}(x, y)$ is the infimum of  $\vert \tilde x-\tilde  y \vert$ over all representatives $\tilde x,\tilde y$ of $x,y \in M$, respectively.
Let $\pi _{M}:\R \to M$ be the canonical projection on the circle, i.e., $\pi _{M} (\tilde x)$ is the equivalent class of $\tilde x\in \R$.  
We write $I_0$ for $\pi _{M}\left([\frac{1}{4}, \frac{3}{4}]\right)$. Let $f_0$ be a $\Ci^r$ diffeomorphism on $M$ such that 
\begin{equation}\label{eq:n1}
f_0(x) =\frac{1}{2} x+\frac{1}{4} \mod 1,\quad x\in I_0,
\end{equation}
and that $\inf _{x\in M} Df_0(x) > 0$ (see Figure \ref{Fig_1_2}). 
We also assume that $f_0$ has exactly 
one source. 
 Then, it is not difficult to see that the set of points with historic behaviour for $f_0$ is an empty set, in particular, a zero Lebesgue measure set. (Note  that basin of attraction of $\pi _M(\frac{1}{2})$ is the whole space $M$ except the source.)

Next we introduce our main hypothesis for driving systems. 
Let $\Omega$ be a metric space equipped with the Borel $\sigma$-field, and $\mathbb P$ a probability measure on $\Omega$. Let $\theta  : \Omega  \to  \Omega$ be a continuous mapping.
Given an integer $\nu \geq 0$,
  $\omega \in \Omega$ 
   and an open set $U \subset \Omega$, 
   we say that an integer $j $ is in a \emph{$\nu$-trapped period} of 
    $\omega$ for  $U$ 
if   $j \geq \nu$  and $\theta ^{j-i} \omega \in U $ 
 for all  $i\in [0, \nu ]$.
For  
$n\geq  1$, we set
\begin{equation*}
N  _{\nu }( \omega , U; n) = \# \{ j \in [0 ,n-1] \; : \; \text{$j$ is in a $\nu$-trapped period of $\omega $ for $U$} \} .
\end{equation*}
Let $U_\delta (\omega )$ be the ball of $\omega \in \Omega$ with radius $\delta >0$.
We will assume the following condition:
\begin{itemize}
\item[(H)] There is a $\mathbb P$-positive measure set $\Gamma \subset \Omega$ and distinct points $p$ and $\hat p$  such that for any integer $\nu \geq 0$ and positive number $\delta$, one can find two distinct real numbers $\lambda _1$ and $\lambda _2$ in $[0, 1]$ and subsequences $\{ n_1(J) \} _{J\geq 1}$ and $\{ n_2 (J) \} _{J\geq 1}$ of $\mathbb N$ such that
 \begin{equation*}
\lim _{J\to \infty} \frac{N_{\nu } ( \omega , U_\delta (p) ; n _i(J)) }{n _i (J)} =1 -\lambda _i ,  \quad  \lim _{J\to \infty}  \frac{N_{\nu } (\omega , U_\delta (\hat p) ;   n _i (J)) }{n _i(J)} = \lambda _i
 \end{equation*}
 for  $i=1, 2$. 
\end{itemize}

Let $\kappa :\Omega \to [-1,1]$ be a surjective continuous function such that $\kappa (p) \neq \kappa (\hat p)$ and that the pushforward $\kappa _* \mathbb P$ of $\mathbb P$ by $\kappa$ is absolutely continuous with respect to $\mathrm{Leb} _{\mathbb R}$. 
Fix  a noise level $0<  \epsilon <\frac{1}{8}$. 
We define  a parametrised perturbation $f: \Omega \times M\to M$  of $f_0$ by
\begin{equation}\label{eq:defoffepsilon}
 f_\omega (x) \equiv
  f(\omega , x) = f_0(x)+ \epsilon \kappa (\omega ) \mod 1, \quad (\omega , x) \in \Omega \times M .
\end{equation}




Now we can provide our main theorems:
\begingroup
\setcounter{tmp}{\value{thm}}
\setcounter{thm}{0}
\renewcommand\thethm{\Alph{thm}}
\begin{thm}\label{thm:main}
Suppose that $\theta$ satisfies the condition (H).
Let $F$ be the NDS induced by $f$ in 
 \eqref{eq:defoffepsilon} over $\theta$. 
Then  for any $\omega \in \Gamma $, there exists a positive Lebesgue measure set (including $I_0$) consisting of  points with historic behaviour at $\omega$.
\end{thm}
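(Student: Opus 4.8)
The plan is to work on the arc $I_0$, on which the dynamics is explicitly a uniform contraction whose fixed point is steered by the noise. Since $\epsilon<\tfrac18$ and $\kappa$ takes values in $[-1,1]$, for every $\omega$ the restriction of $f_\omega$ to $I_0=\pi_M([\tfrac14,\tfrac34])$ is the affine map $x\mapsto \tfrac12 x+\tfrac14+\epsilon\kappa(\omega)$, which contracts by the factor $\tfrac12$ and maps $I_0$ into its interior; hence the forward orbit at $\omega$ of any $x\in I_0$ stays in $I_0$ for all time. Consequently it suffices to prove that \emph{every} $x\in I_0$ has historic behaviour at \emph{every} $\omega\in\Gamma$, and then take the required positive Lebesgue measure set to be $I_0$ itself (which has Lebesgue measure $\tfrac12$). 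Writing $c_p=\tfrac12+2\epsilon\kappa(p)$ and $c_{\hat p}=\tfrac12+2\epsilon\kappa(\hat p)$ for the attracting fixed points of the ``frozen'' affine maps at $p$ and $\hat p$, the hypotheses $\kappa(p)\neq\kappa(\hat p)$ and $\epsilon<\tfrac18$ guarantee that $c_p$ and $c_{\hat p}$ are distinct points lying in the interior of $I_0$.

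First I would fix $\rho_0>0$ so small that the $\rho_0$-balls around $c_p$ and around $c_{\hat p}$ are disjoint and contained in $I_0$, and fix a continuous $\varphi\colon M\to[0,1]$ that is identically $0$ on the $\rho_0$-ball of $c_p$ and identically $1$ on the $\rho_0$-ball of $c_{\hat p}$. The key step --- call it the trapping lemma --- is: there exist an integer $\nu\geq 0$ and $\delta>0$ with $U_\delta(p)\cap U_\delta(\hat p)=\emptyset$ such that, for every $\omega$, every $x\in I_0$ and every $k$ that lies in a $\nu$-trapped period of $\omega$ for $U_\delta(p)$, the point $f^{(k)}_\omega(x)$ lies in the $\rho_0$-ball of $c_p$ (so that $\varphi(f^{(k)}_\omega(x))=0$), and symmetrically with $\hat p$ and the value $1$. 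To prove it, note that during such a trapped period the $\nu$ maps appearing in the composition \eqref{eq:basic} for $f^{(k)}_\omega$ are, by continuity of $\kappa$ at $p$, all of the form $x\mapsto\tfrac12 x+\tfrac14+\epsilon b$ with $|b-\kappa(p)|<\eta$; composing them and using that each such map is a $\tfrac12$-contraction with fixed point within $2\epsilon\eta$ of $c_p$, one gets $d_M(f^{(k)}_\omega(x),c_p)\le 2^{-\nu}\operatorname{diam}(I_0)+2\epsilon\eta$. It then suffices to choose $\nu$ large, then $\eta$ small, then $\delta$ small enough that $\kappa(U_\delta(p))\subset(\kappa(p)-\eta,\kappa(p)+\eta)$ and likewise at $\hat p$, and that $U_\delta(p)$, $U_\delta(\hat p)$ are disjoint.

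With $\nu$ and $\delta$ fixed as above, I would then invoke the condition (H): for the given $\omega\in\Gamma$ it supplies distinct $\lambda_1,\lambda_2\in[0,1]$ and subsequences $\{n_1(J)\}$, $\{n_2(J)\}$ along which the $\nu$-trapped densities for $U_\delta(p)$ and for $U_\delta(\hat p)$ converge to $1-\lambda_i$ and $\lambda_i$ respectively. Fixing also $x\in I_0$, I split the indices $k\in[0,n-1]$ of the Birkhoff sum $\frac1n\sum_{k=0}^{n-1}\varphi(f^{(k)}_\omega(x))$ into three classes: those in a $\nu$-trapped period for $U_\delta(p)$ (on which $\varphi=0$ by the trapping lemma); those in a $\nu$-trapped period for $U_\delta(\hat p)$ (on which $\varphi=1$, and this class is disjoint from the previous one because $U_\delta(p)\cap U_\delta(\hat p)=\emptyset$); and the remaining indices (on which $0\le\varphi\le1$). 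The cardinality of the third class divided by $n$ equals $1$ minus the two $\nu$-trapped densities, so it tends to $0$ along $n_i(J)$ by (H); therefore $\frac1{n_i(J)}\sum_{k=0}^{n_i(J)-1}\varphi(f^{(k)}_\omega(x))\to\lambda_i$ as $J\to\infty$. Since $\lambda_1\neq\lambda_2$, the limit \eqref{eq:hb1} for this $\varphi$ does not exist, so $x$ has historic behaviour at $\omega$; as $x\in I_0$ and $\omega\in\Gamma$ were arbitrary, this proves the theorem.

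I expect the main obstacle to be proving the trapping lemma cleanly, together with the realization that $\varphi$ should be taken \emph{locally constant} near $c_p$ and $c_{\hat p}$ rather than merely separating their values: by arranging that the orbit actually enters the neighbourhoods on which $\varphi$ is exactly $0$ and exactly $1$, the Birkhoff averages along the two subsequences converge \emph{exactly} to $\lambda_1$ and $\lambda_2$, so the bare distinctness of $\lambda_1$ and $\lambda_2$ provided by (H) --- whose gap we do not control --- immediately forces non-convergence, with no need for a diagonal argument over shrinking scales. The remaining points requiring care are the index bookkeeping relating the $\nu$-trapped-period condition on the $\theta$-orbit of $\omega$ to the composition order in \eqref{eq:basic}, and checking that the density of ``untrapped'' times vanishes along the chosen subsequences.
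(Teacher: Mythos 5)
Your proposal is correct and follows essentially the same route as the paper: contraction of $f_\omega$ on $I_0$, the frozen fixed points $X_p=\frac12+2\epsilon\kappa(p)$ and $X_{\hat p}=\frac12+2\epsilon\kappa(\hat p)$, a trapping estimate of the form $2^{-\nu}\mathrm{diam}(I_0)+C\epsilon\eta$ forcing $f^{(j)}_\omega(x)$ into disjoint neighbourhoods during $\nu$-trapped periods, a test function locally constant on those neighbourhoods, and condition (H) to produce different subsequential Birkhoff averages for all $(\omega,x)\in\Gamma\times I_0$. The only (harmless) difference is that you use both density statements in (H) along each subsequence to get the exact limits $\lambda_1\neq\lambda_2$, whereas the paper settles for the one-sided bounds $\limsup\geq 1-\lambda_1>1-\lambda_2\geq\liminf$.
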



For an application of Theorem \ref{thm:main}, 
we will show that the condition (H) can be satisfied by the classical Bowen example. 
Let $\Omega $ be a  compact surface and $\mathbb P$ the normalised Lebesgue measure of $\Omega$. 
\begin{thm}\label{thm:Bowen}
The time-one map $\theta$ of the Bowen flow (definition  given in Subsection \ref{subsection:Bowen}) on $\Omega$ satisfies the condition (H). 
\end{thm}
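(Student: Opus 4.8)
The plan is to take $p$ and $\hat p$ to be the two hyperbolic saddle equilibria of the Bowen vector field, to take for $\Gamma$ the open region $V$ enclosed by the heteroclinic cycle, with the (Lebesgue-null) stable manifolds of $p$ and $\hat p$ deleted, and to exploit the classical geometry of Bowen's example: every orbit starting in $V$ spirals toward the cycle, spending longer and longer stretches of time alternately near $p$ and near $\hat p$. Since $V$ is open and nonempty and $\mathbb P$ is normalised Lebesgue measure, $\mathbb P(\Gamma)=\mathbb P(V)>0$; and, writing $\phi_t$ for the Bowen flow, its time-one map $\theta=\phi_1$ fixes $p$ and $\hat p$, so the balls $U_\delta(p)$ and $U_\delta(\hat p)$ are adapted to the saddles. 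We fix an arbitrary $\omega\in\Gamma$, $\nu\ge 0$ and $\delta>0$.

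The first and main step is to make precise the following quantitative picture. Using a $C^1$ linearisation near each saddle (or, more elementarily, the classical transition-map/Dulac estimates, which need only $C^1$ regularity of the vector field), one shows that there are an integer $J_0=J_0(\omega,\delta)$ and integers $J_0\le m_1<m_2<\cdots\to\infty$ such that $\theta^j\omega\in U_\delta(p)$ for $j\in[m_{2k},m_{2k+1})$, $\theta^j\omega\in U_\delta(\hat p)$ for $j\in[m_{2k+1},m_{2k+2})$, and $\theta^j\omega\notin U_\delta(p)\cup U_\delta(\hat p)$ for $j$ in the ``transition gaps'' between these intervals and for $j\in[0,m_1)$; moreover the length of each transition gap is bounded by a constant $T_\delta$ independent of $k$ and of $\omega\in\Gamma$. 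The sojourn lengths $\ell_k:=m_{k+1}-m_k$ tend to infinity, and the transition-map estimates give, for the successive distances $\xi_k$ of the orbit to the cycle, $\log(1/\xi_{k+1})=c_k\log(1/\xi_k)+O(1)$, where $c_k$ alternates between $c_p=a_p/b_p$ and $c_{\hat p}=a_{\hat p}/b_{\hat p}$ (the ratios of stable to unstable eigenvalue magnitudes at the saddles) and $c_pc_{\hat p}>1$ is the attractivity condition for Bowen's cycle. Consequently $\ell_{2k+2}/\ell_{2k}$ and $\ell_{2k+1}/\ell_{2k-1}$ tend, as $k\to\infty$, to $c_pc_{\hat p}>1$; summing the resulting geometric series, the proportion of $[0,m_{2k})$ (resp.\ of $[0,m_{2k+1})$) spent inside $U_\delta(p)$ converges, as $k\to\infty$, to $\alpha=a_{\hat p}/(a_{\hat p}+b_p)$ (resp.\ to $\beta=b_{\hat p}/(a_p+b_{\hat p})$), and the attractivity condition $a_pa_{\hat p}>b_pb_{\hat p}$ forces $\alpha>\beta$. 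This step is essentially the classical proof that Bowen's flow exhibits historic behaviour, carried out in a form that tracks the two saddles simultaneously and keeps the transition times uniformly bounded; it is the technical heart of the argument, the only delicate points being the uniform bound $T_\delta$ and the control of the $O(1)$ errors, both standard near a hyperbolic saddle.

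The second step converts these sojourn counts into the $\nu$-trapped counts. By definition, $j$ lies in a $\nu$-trapped period for $U_\delta(p)$ exactly when $\theta^{j-i}\omega\in U_\delta(p)$ for all $i\in[0,\nu]$; since the orbit avoids $U_\delta(p)$ throughout the transition gaps and the $U_\delta(\hat p)$-intervals, within the $k$-th visit to $U_\delta(p)$ the qualifying indices are exactly those in $[m_{2k}+\nu,m_{2k+1}-1]$, that is $\ell_{2k}-\nu$ of them once $\ell_{2k}>\nu$. Hence $N_\nu(\omega,U_\delta(p);n)$ differs from $\#\{0\le j<n:\theta^j\omega\in U_\delta(p)\}$ by at most $\nu$ times the number $v(n)$ of distinct $U_\delta(p)$-visits before time $n$; since the visit lengths blow up, $v(n)=o(n)$, and likewise the total length of all transition gaps before time $n$ is at most $T_\delta\cdot v(n)=o(n)$. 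Therefore $\tfrac1n N_\nu(\omega,U_\delta(p);n)$ and $\tfrac1n N_\nu(\omega,U_\delta(\hat p);n)$ have the same $\limsup$ and $\liminf$ as the Birkhoff averages of $\mathbf{1}_{U_\delta(p)}$ and of $\mathbf{1}_{U_\delta(\hat p)}$ along $\{\theta^j\omega\}$ — namely $\limsup=\alpha$, $\liminf=\beta$ in the first case and $\limsup=1-\beta$, $\liminf=1-\alpha$ in the second — and the two quantities add up to $1+o(1)$.

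Finally, put $\lambda_1=1-\alpha$ and $\lambda_2=1-\beta$, which are distinct since $\alpha>\beta$, and choose the subsequences $n_1(J)=m_{2J}$ (end of the $J$-th long stay near $p$) and $n_2(J)=m_{2J+1}$ (end of the $J$-th long stay near $\hat p$). Then $N_\nu(\omega,U_\delta(p);n_1(J))/n_1(J)\to\alpha=1-\lambda_1$ and $N_\nu(\omega,U_\delta(\hat p);n_1(J))/n_1(J)\to1-\alpha=\lambda_1$, while $N_\nu(\omega,U_\delta(p);n_2(J))/n_2(J)\to\beta=1-\lambda_2$ and $N_\nu(\omega,U_\delta(\hat p);n_2(J))/n_2(J)\to1-\beta=\lambda_2$. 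Since $\mathbb P(\Gamma)>0$, $p\ne\hat p$, and $\nu,\delta$ were arbitrary, this is precisely condition (H). (Note that $\alpha$ and $\beta$ do not in fact depend on $\omega\in\Gamma$, since the initial data wash out against the geometric growth of the $\ell_k$; in any case (H) permits $\lambda_1,\lambda_2$ — and certainly the subsequences — to depend on $\omega$, so only the per-orbit oscillation is needed.)
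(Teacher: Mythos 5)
Your proposal is correct and, in essence, follows the same route as the paper: evaluate the frequency of $\nu$-trapped times along the two subsequences given by the ends of the successive sojourns near $p$ and near $\hat p$, use the geometric growth of sojourn lengths coming from the attractivity condition so that the last loop dominates, and read off the limiting fractions from the eigenvalue ratios; your $\alpha=\beta_-/(\beta_-+\alpha_+)=\sigma_1/(1+\sigma_1)$ and $\beta=\beta_+/(\alpha_-+\beta_+)=1/(1+\sigma_2)$ are exactly the paper's $1-\lambda_1$ and $1-\lambda_2$, and $\alpha>\beta$ is equivalent to $\sigma_1\sigma_2>1$, i.e.\ to $\alpha_-\beta_->\alpha_+\beta_+$. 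The only substantive difference is that the paper simply quotes Takens' sojourn-time asymptotics \eqref{eq:rrrr1}--\eqref{eq:rrrr1b} and then controls the $\nu$-trapping and integer-part corrections by explicit sums and geometric-series bounds, whereas you rederive those asymptotics via linearisation/transition-map estimates and absorb all corrections into an $o(n)$ count of visits and uniformly bounded transition gaps; your version is more self-contained, the paper's shorter. One bookkeeping slip: with your stated convention that $[m_{2k},m_{2k+1})$ is the $p$-sojourn, the fraction $\alpha$ is attained at the ends of $p$-sojourns, i.e.\ along the odd-indexed times $m_{2k+1}$, so the limits you assign to $[0,m_{2k})$ and $[0,m_{2k+1})$ (and hence the roles of $n_1(J)=m_{2J}$ and $n_2(J)=m_{2J+1}$) are swapped relative to that convention; your parenthetical descriptions (``end of the long stay near $p$/near $\hat p$'') show the intended choice, and swapping the parity fixes everything without affecting the conclusion. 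Finally, like the paper's own argument, yours tacitly requires $\delta$ small enough that $U_\delta(p)$ and $U_\delta(\hat p)$ are disjoint (this is the only case needed in the proof of Theorem \ref{thm:main}), since for large $\delta$ the disjoint-sojourn structure breaks down.
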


We will also show that the persistent driving systems in \cite{KS2017} satisfy the condition (H).
Let $\mathrm{Diff}^{\tilde r} (\Omega ,\Omega )$ be the set of all $\mathscr C^{\tilde r}$ diffeomorphisms on $\Omega$ endowed with the usual $\mathscr C ^{\tilde r}$ metric with $2 \leq  \tilde r < \infty$, and let $\mathcal N \subset \mathrm{Diff}^{\tilde r}(\Omega , \Omega )$ be a Newhouse open set.\footnote{
For each $\tilde \theta \in \mathrm{Diff} ^{\tilde r}(\Omega ,\Omega)$ with a saddle fixed point $\tilde p$ with $\tilde r\geq 2$, one can find an open set  $\mathcal{N}$ in $\mathrm{Diff} ^{\tilde r}(\Omega ,\Omega)$ (called  a \emph{Newhouse open set})
such that the closure of $\mathcal N$ contains $\tilde \theta$ and 
any element of $\mathcal{N}$  is arbitrarily $\mathscr C ^{\tilde r}$-approximated by a diffeomorphism 
$\theta $ with 
a homoclinic tangency associated with 
the continuation $p$ of $\tilde p$, and moreover $\theta $ has a $\mathscr C ^{\tilde r}$-persistent tangency associated with some  
nontrivial hyperbolic set $\Lambda $ containing $p$ 
(i.e.~there is a $\mathscr C ^{\tilde r}$ neighborhood of $\theta $ any element of which has a homoclinic tangency 
for the continuation of $\Lambda$). See \cite{Newhouse79}.
}

\begin{thm}\label{thm:Newhouse}
 There exists a dense subset $\mathcal D$ of $\mathcal N$ such that all  $\theta \in \mathcal D$ satisfies the condition (H). 
\end{thm}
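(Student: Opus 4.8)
\emph{Proof plan.}
The plan is to derive the condition (H) from the wandering--domain construction of \cite{KS2017}. Recall that there one shows that, $\mathscr C^{\tilde r}$-densely in the Newhouse open set $\mathcal N$, there are diffeomorphisms $\theta$ possessing a \emph{non-trivial wandering domain} --- a non-empty open set $D\subset\Omega$ with the $\theta^n(D)$ pairwise disjoint --- together with two distinct hyperbolic \emph{fixed} points $p,\hat p$ of $\theta$ (the continuation of the saddle generating $\mathcal N$, and a second one provided by the construction) such that the forward orbit of every $x\in D$ shadows $p$ and $\hat p$ in long alternating blocks. What this construction really provides is, in effect, the following quantitative statement: there are strictly increasing sequences $n_1(J),n_2(J)\to\infty$ in $\N$ and numbers $\mu_1\neq\mu_2$ in $[0,1]$ such that, for every sufficiently small $\delta>0$ and every $x\in D$, the interval $[0,n-1]$ decomposes for each $n$ into at most $o(n)$ maximal blocks each contained in $U_\delta(p)$, at most $o(n)$ maximal blocks each contained in $U_\delta(\hat p)$, and a remaining ``transition'' set of cardinality $o(n)$ (with implied constants allowed to depend on $\delta$); and that along $n_i(J)$ the total length of the $U_\delta(p)$-blocks equals $\mu_i\,n_i(J)+o(n_i(J))$ while the total length of the $U_\delta(\hat p)$-blocks equals $(1-\mu_i)\,n_i(J)+o(n_i(J))$. (In the simplest form of the construction the blocks grow super-geometrically and one takes $\mu_1=1$, $\mu_2=0$.) Take $\mathcal D$ to be this dense subset of $\mathcal N$; then only the verification of (H) for a fixed $\theta\in\mathcal D$ remains, and this part is essentially bookkeeping.

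Fix $\theta\in\mathcal D$ and set $\Gamma=D$. Since $D$ is non-empty and open and $\mathbb P$ is the normalised Lebesgue measure on $\Omega$, we have $\mathbb P(\Gamma)>0$, and $p\neq\hat p$ by construction. Fix $\omega\in\Gamma$, an integer $\nu\geq0$, and $\delta>0$ small enough that $U_\delta(p)\cap U_\delta(\hat p)=\emptyset$, and put $a_n=\#\{j\in[0,n-1]:\theta^j\omega\in U_\delta(p)\}$, $b_n=\#\{j\in[0,n-1]:\theta^j\omega\in U_\delta(\hat p)\}$. A time $j$ with $\theta^j\omega\in U_\delta(p)$ fails to lie in a $\nu$-trapped period of $\omega$ for $U_\delta(p)$ only when $j<\nu$ or one of the $\nu$ preceding iterates has left $U_\delta(p)$, i.e.\ only when $j$ lies within distance $\nu$ in $\N_0$ of an entry of the $\omega$-orbit into $U_\delta(p)$; hence
\[
0\ \le\ a_n-N_\nu(\omega,U_\delta(p);n)\ \le\ \nu\cdot\#\{\text{entries into }U_\delta(p)\text{ up to time }n\}+\nu .
\]
By the block decomposition the number of those entries is at most the number of $U_\delta(p)$-blocks plus the number of transition times up to time $n$, hence $o(n)$; thus $N_\nu(\omega,U_\delta(p);n)=a_n+o(n)$, and likewise $N_\nu(\omega,U_\delta(\hat p);n)=b_n+o(n)$. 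Specialising to $n=n_i(J)$ and using $a_{n_i(J)}=\mu_i\,n_i(J)+o(n_i(J))$, $b_{n_i(J)}=(1-\mu_i)\,n_i(J)+o(n_i(J))$, we get, with $\lambda_i:=1-\mu_i$,
\[
\lim_{J\to\infty}\frac{N_\nu(\omega,U_\delta(p);n_i(J))}{n_i(J)}=\mu_i=1-\lambda_i,\qquad
\lim_{J\to\infty}\frac{N_\nu(\omega,U_\delta(\hat p);n_i(J))}{n_i(J)}=1-\mu_i=\lambda_i .
\]
Since $\mu_1\neq\mu_2$ we have $\lambda_1\neq\lambda_2$, and $\lambda_1,\lambda_2\in[0,1]$; so $\Gamma$, $p$, $\hat p$, the $\lambda_i$ and the subsequences $\{n_i(J)\}$ witness (H) for $\theta$. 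As this holds for every $\theta$ in the dense subset $\mathcal D$ of $\mathcal N$, the theorem follows.

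The real work lies in extracting the block structure from \cite{KS2017} in precisely the form used above, and three features of it are essential. First, the two shadowed points must be taken to be \emph{fixed} points, so that the single-point balls $U_\delta(p),U_\delta(\hat p)$ appearing in (H) contain the relevant long blocks for \emph{every} $\nu\geq0$ --- a periodic orbit of period at least $2$ would destroy the trapped-period count for large $\nu$. Second, the number of blocks and the transition set up to time $n$ must be $o(n)$, which is exactly what lets the trapped-period counts collapse onto the occupation counts. Third, there must be two genuinely distinct limiting occupation fractions $\mu_1\neq\mu_2$ along suitable subsequences; this is the ``historic'' heart of the matter and the only place where the oscillatory character of the construction in \cite{KS2017} enters. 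I expect locating and assembling these three facts inside \cite{KS2017}, rather than the elementary estimates above, to be the main obstacle.
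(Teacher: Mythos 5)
Your overall route is the same as the paper's: feed the wandering-domain construction of \cite{KS2017} into condition (H), letting $\Gamma$ be the open set of initial points whose orbits spend long alternating blocks in $U_\delta(p)$ and $U_\delta(\hat p)$, and then convert occupation counts into $\nu$-trapped counts by observing that the loss is at most $\nu$ per entry into the ball. That bookkeeping part is correct and matches the paper's estimates (there, each block of length about $z_kk^2$ resp.\ $k^2$ contributes its length minus $2n_0+\nu$).

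However, there is a genuine gap exactly where you yourself flag ``the main obstacle'': you assume that \cite{KS2017} already supplies, for a dense set of $\theta\in\mathcal N$, two \emph{distinct} limiting occupation fractions $\mu_1\neq\mu_2$ for shrinking neighbourhoods of the two fixed points, together with $o(n)$ control on the number of blocks and transition times. That quantitative statement is not available off the shelf; it is precisely what the paper has to produce, and it does so by going back inside the construction rather than citing its conclusion. Concretely, the construction yields, for \emph{any} prescribed sequence $\boldsymbol{z}=\{z_k\}$ with entries in $\{z_0,z_0+1\}$, a diffeomorphism $\theta_{\boldsymbol{z}}$ (densely in $\mathcal N$) and rectangles $R_k$ such that orbits from $\Gamma=\mathrm{Int}\,R_{k_0}$ stay in $U_\delta(p)$ for about $z_kk^2$ consecutive times, then in $U_\delta(\hat p)$ for about $k^2$ times, with transition lengths $a_k+b_k=O(k)$. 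The two distinct fractions are then \emph{engineered}: one sets $z_k=z_0$ for $k(J'-1)<k\le k(J')$ with $J'$ odd and $z_k=z_0+1$ with $J'$ even, and chooses the switching times $k(J')$ growing so fast that the contribution of all earlier blocks and the error terms ($2n_0+\nu$ per block, $a_k+b_k$, and $\hat m_{k(J'-1)}/\hat m_{k(J')}$) become negligible; this forces the occupation ratio of $U_\delta(p)$ along the times $\hat m_{k(J')}$ to approach $\frac{z_0}{z_0+1}$ for odd $J'$ and $\frac{z_0+1}{z_0+2}$ for even $J'$, giving $\lambda_1=\frac{1}{z_0+1}\neq\lambda_2=\frac{1}{z_0+2}$ (in particular your ``simplest form'' $\mu_1=1$, $\mu_2=0$ does not occur: the per-cycle ratio is always about $z_*:1$). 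Without this specification of $\boldsymbol{z}$ and of $k(J')$ --- i.e.\ without saying which diffeomorphisms make up $\mathcal D$ and why their orbits have two different subsequential occupation fractions that moreover pair up as $1-\lambda_i$ and $\lambda_i$ with the \emph{same} $\lambda_i$ for $p$ and $\hat p$ --- condition (H) does not follow from the mere existence of a wandering domain with historic behaviour: non-convergence of some time average gives two subsequential limits of a Birkhoff average, not the exact paired limits that (H) demands.
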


\endgroup

\setcounter{thm}{\thetmp}

\subsection{Problem}
Before starting the proofs of main theorems, we briefly  consider historic behaviour for nonautonomous contraction mappings in more general setting.
Let $f_0$ be as in \eqref{eq:n1}. 
Let $(\Omega , \mathcal F, \mathbb P)$ be a probability space.
Let $\kappa$ be a measurable  function on $\Omega$ with values in $[-1, 1]$ $\mathbb P$-almost surely.
We define  $f: \Omega \times M\to M$   by
\begin{equation}\label{eq:defoffepsilon2}
 f_\omega (x) \equiv f(\omega , x) = f_0(x)+ \epsilon \kappa (\omega ) , \quad (\omega , x) \in \Omega \times M .
\end{equation}
Furthermore, we assume that $\theta$ is nonsingular with respect to $\mathbb P$ (i.e.,~$\mathbb P(\theta ^{-1} \Gamma )=1$ if $\Gamma$ is measurable and $\mathbb P(\Gamma ) =1$). 

We say that the driving system $\theta$ is 
\emph{historic}
if there exists a positive measure set $\Gamma$ with respect to $\mathbb P$ such that for each $\omega \in \Gamma$, one can find an integrable function $b:\Omega \to \mathbb R$ whose time average $\lim _{n\to \infty} \frac{1}{n} \sum _{j=0}^{n-1} b (\theta ^j \omega )$ does not exist. 
 Otherwise, we say that $\theta$ is \emph{non-historic}.

 \begin{remark}\label{rmk:1}
A measurable NDS $F$  over a measurable driving system $\theta$ 
is said to be a \emph{random dynamical system} (abbreviated RDS) if $\theta$ is measure-preserving (refer to  \cite{Arnold};  important examples of random dynamical systems are i.i.d.~NDS's, see Appendix \ref{appendix:araujo}). 
It follows from Birkhoff's ergodic theorem that any measure-preserving driving system is non-historic. 
That is,  any random dynamical system is an NDS over a non-historic driving system.
See Figure \ref{fig-0}.
\end{remark}

\begin{figure}[hbtp]
\centering
\scalebox{0.7}{\includegraphics[clip]{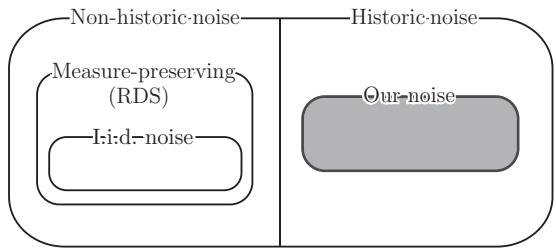}}
\caption{Classification of nonautonomous dynamical systems.}
\label{fig-0}
\end{figure}

 The following proposition can be shown by  a standard graph-transformation argument, but might be suggestive for historic behaviour of nonautonomous contraction mappings.  See Appendix \ref{appendix:proof} for the proof.
\begin{prop}\label{prop:4}
Let  $\theta$  be measurably invertible.
Let $f$ be as in \eqref{eq:defoffepsilon2} and 
 $F$  the NDS induced by $f$ over a driving system $\theta$.
 Suppose that  $\theta$ is non-historic.
Then for $\mathbb P$-almost every $\omega$, the set of points in $I_0$ with historic behaviour at $\omega$ is an empty set, in particular, a Lebesgue zero measure set.
\end{prop}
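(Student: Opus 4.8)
The plan is to exploit the fact that all the maps $f_\omega$ are uniform contractions on the invariant interval $I_0$, so the forward orbit of any point in $I_0$ is slaved, at an exponential rate, to the ``pullback attractor'' of the NDS — namely the single point
\[
a(\omega)=\lim_{m\to\infty} f_{\theta^{-1}\omega}\circ f_{\theta^{-2}\omega}\circ\cdots\circ f_{\theta^{-m}\omega}\Bigl(\tfrac12\Bigr),
\]
which exists for $\mathbb P$-a.e.\ $\omega$ precisely because $\theta$ is measurably invertible and $f_0$ has slope $\tfrac12$ on $I_0$ (the composition of $m$ such maps shrinks $I_0$ by $2^{-m}$, so the images are Cauchy). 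First I would check that $a(\theta\omega)=f_\omega(a(\omega))$ for a.e.\ $\omega$, i.e.\ $a$ is an equivariant section; this is the standard graph-transform / pullback-attractor computation and is where measurable invertibility of $\theta$ is used.

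Next I would show that $\tfrac1n\sum_{j=0}^{n-1}\varphi(f^{(j)}_\omega(x))$ and $\tfrac1n\sum_{j=0}^{n-1}\varphi(a(\theta^j\omega))$ have the same asymptotic behaviour for every $x\in I_0$ and every continuous $\varphi$. Indeed $f^{(j)}_\omega(x)=f_{\theta^{j-1}\omega}\circ\cdots\circ f_\omega(x)$ lies within $2^{-j}|I_0|$ of $a(\theta^j\omega)$ (both are obtained by pushing points of $I_0$ forward through the same $j$ contractions, using $f_\omega(I_0)\subset I_0$), so by uniform continuity of $\varphi$ the difference of the two Cesàro averages tends to $0$. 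Hence $x\in I_0$ has historic behaviour at $\omega$ if and only if there is a continuous $\varphi$ for which $\tfrac1n\sum_{j=0}^{n-1}\varphi(a(\theta^j\omega))$ fails to converge.

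Now the key reduction: $\varphi\circ a$ is, for each fixed continuous $\varphi$, a measurable function on $\Omega$, and it is integrable since it is bounded ($M$ is compact). If for a positive-$\mathbb P$-measure set of $\omega$ there were such a $\varphi$ with $\tfrac1n\sum_{j=0}^{n-1}(\varphi\circ a)(\theta^j\omega)$ divergent, then $b:=\varphi\circ a$ would be an integrable function witnessing that $\theta$ is historic — contradicting the hypothesis that $\theta$ is non-historic. (Strictly, one must be slightly careful: the definition of ``historic driving system'' asks that a single bad $b$ exist for each $\omega\in\Gamma$, whereas here the bad $\varphi$ may depend on $\omega$; but this is handled by a routine argument — e.g.\ replacing $\Gamma$ by $\bigcup_k\Gamma_k$ where $\Gamma_k$ uses $\varphi_k$ from a countable dense subset of $C(M)$, one of which must carry positive measure, and noting that non-convergence of the Cesàro average for some continuous $\varphi$ forces it for some $\varphi_k$ from that dense set.) Therefore for $\mathbb P$-a.e.\ $\omega$ no continuous $\varphi$ can make the average diverge, so the set of points of $I_0$ with historic behaviour at $\omega$ is empty.

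The main obstacle I anticipate is the bookkeeping in the last paragraph: passing from ``for each $\omega$ in a positive measure set there is an $\omega$-dependent continuous $\varphi$'' to a \emph{single} integrable $b$ as required by the definition of historic $\theta$, and simultaneously arranging that the exceptional $\mathbb P$-null set (where $a(\omega)$ fails to exist or equivariance fails) is swallowed harmlessly. Reducing to a countable dense family $\{\varphi_k\}\subset C(M)$ and using that divergence of a Cesàro average is detected by some element of a dense set handles this, but it is the step that requires actual care rather than soft arguments.
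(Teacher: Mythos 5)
Your proposal is correct and follows essentially the same route as the paper: construct an attracting invariant section over $\theta$ (your pullback limit $a(\omega)$ is exactly the fixed point $Y$ of the paper's graph transform, obtained there by the Banach fixed-point theorem on $L^\infty(\Omega,I_0)$), use the uniform $1/2$-contraction on $I_0$ to slave every orbit of $I_0$ exponentially to $a(\theta^j\omega)$, and then invoke the non-historic hypothesis for the integrable functions $\varphi\circ a$. Your extra countable-dense-family bookkeeping for the $\omega$-dependence of $\varphi$ is sound (and in fact slightly more careful than the paper's own final step).
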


Comparing  Theorem \ref{thm:main} with proposition \ref{prop:4}, one may naturally ask the following problem.
\begin{problem}\label{prob:2}
Let $f$ be as in \eqref{eq:defoffepsilon2} and 
 $F$  the NDS induced by $f$ over a driving system $\theta$.
 Suppose that  $\theta$ is historic.
Then under some mild condition on $\kappa$, can one find a positive measure set $\Gamma$ with respect to $\mathbb P$ such that  there exists a positive Lebesgue measure set (including $I_0$) consisting of  points with historic behaviour at $\omega$  for any $\omega \in \Gamma$?
\end{problem}

\begin{remark}
Apart from driving systems, one can consider generalisations of Theorem \ref{thm:main} to other unperturbed  systems $f_0$: 
It is highly likely that the existence of a positive Lebesgue measure set consisting of points with historic behaviour 
remains true  for any $\Ci ^r$ diffeomorphism on any closed smooth Riemannian manifold $M$, only by requiring that $f_0$ 
has a sink (with an appropriate modification on  the formulation of small perturbation $f$ in higher dimension; see Example 2 in \cite{Araujo2000}).
It might also be possible (and of great interest) to explore generalisation  to hyperbolic mappings $f_0$ by considering their transfer operators, refer to \cite{BKS}.
However,  in order to keep our presentation as transparent as possible, we restricted ourselves to  
the concrete example given in \eqref{eq:n1}.
\end{remark}

\section{Proofs}\label{section:proof}
\subsection{Proof of Theorem \ref{thm:main}}

 We start the proof of Theorem \ref{thm:main} by noting that $f _\omega (I_0) \subset I_0$  and $f_\omega |_{I_0} :I_0\to I_0$ has a unique fixed point, denoted by $X_\omega$,  for each $\omega \in \Omega$. 
In particular, for $\omega =p,\hat p$, 
\begin{equation}\label{eqn_XpXp}
X_p=\frac12+2\epsilon \kappa(p),\quad
X_{\hat p}=\frac12+2\epsilon \kappa(\hat p) .
\end{equation}
See Figure \ref{Fig_1_2}.
\begin{figure}[hbtp]
\centering
\scalebox{0.6}{\includegraphics[clip]{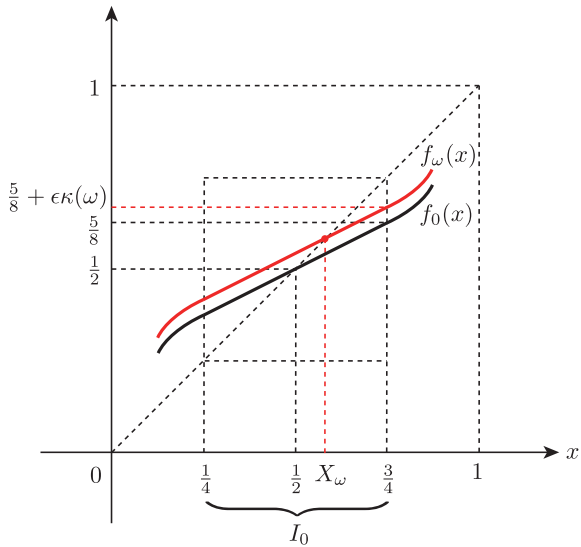}}
\caption{Locally contraction mappings $f_0$ and $f_\omega$.}
\label{Fig_1_2}
\end{figure}


We need the following elementary lemma. 
\begin{lem}\label{lem:1}
For any   $n\in \N_0$, $x\in I_0$ and $\omega ,\omega ^\prime \in \Omega$, we have
\begin{equation}\label{eq:lem1}
d_{M}\left( f^{(n)} _{\omega}(x) ,X_{\omega ^\prime } \right) \leq \frac{1}{2^n} +6\epsilon \max_{0\leq j\leq n-1} \left\vert \kappa(\theta ^j \omega )  - \kappa(\omega ^\prime )\right\vert .
\end{equation}
\end{lem}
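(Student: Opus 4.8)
The plan is to pass to the universal cover $\R\to M$ and exploit the fact that, on $I_0$, the maps $f_\omega$ are genuine affine contractions of a fixed interval. Since $0<\epsilon<\tfrac18$ and $\kappa$ takes values in $[-1,1]$, the affine map $\ell_\omega(t)=\tfrac12 t+\tfrac14+\epsilon\kappa(\omega)$ is increasing and sends $[\tfrac14,\tfrac34]$ into $[\tfrac38-\tfrac18,\tfrac58+\tfrac18]=[\tfrac14,\tfrac34]$; moreover $\pi_M\circ\ell_\omega=f_\omega\circ\pi_M$ on $[\tfrac14,\tfrac34]$ (no reduction mod $1$ occurs there), so $\ell_\omega|_{[1/4,3/4]}$ is a lift of $f_\omega|_{I_0}$, and its unique fixed point is $X_\omega=\tfrac12+2\epsilon\kappa(\omega)\in[\tfrac14,\tfrac34]$ (compare \eqref{eqn_XpXp}). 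First I would fix a representative $\tilde x\in[\tfrac14,\tfrac34]$ of $x$ and set $\tilde x_0=\tilde x$, $\tilde x_{n+1}=\ell_{\theta^n\omega}(\tilde x_n)$; then an immediate induction using the composition formula \eqref{eq:basic} shows $\tilde x_n\in[\tfrac14,\tfrac34]$ and $\pi_M(\tilde x_n)=f^{(n)}_\omega(x)$ for all $n\in\N_0$.

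Next I would write down the linear recursion obtained by subtracting $X_{\omega'}=\tfrac12 X_{\omega'}+\tfrac14+\epsilon\kappa(\omega')$ from the definition of $\tilde x_{n+1}$, namely $\tilde x_{n+1}-X_{\omega'}=\tfrac12(\tilde x_n-X_{\omega'})+\epsilon\bigl(\kappa(\theta^n\omega)-\kappa(\omega')\bigr)$, and unfold it to the closed form
\[
\tilde x_n-X_{\omega'}=\frac{1}{2^n}(\tilde x-X_{\omega'})+\epsilon\sum_{j=0}^{n-1}\frac{1}{2^{n-1-j}}\bigl(\kappa(\theta^j\omega)-\kappa(\omega')\bigr).
\]
Using $|\tilde x-X_{\omega'}|\le\tfrac12$ (both points lie in $[\tfrac14,\tfrac34]$), the geometric bound $\sum_{j=0}^{n-1}2^{-(n-1-j)}=\sum_{k=0}^{n-1}2^{-k}<2$, and the elementary inequality $d_M(a,b)\le|\tilde a-\tilde b|$ for arbitrary lifts, I would conclude
\[
d_M\bigl(f^{(n)}_\omega(x),X_{\omega'}\bigr)\le|\tilde x_n-X_{\omega'}|\le\frac{1}{2^{n+1}}+2\epsilon\max_{0\le j\le n-1}\bigl|\kappa(\theta^j\omega)-\kappa(\omega')\bigr|,
\]
which is stronger than \eqref{eq:lem1}; the case $n=0$ is covered trivially by $d_M\le\tfrac12\le1$, and the slack between the factors $2$ and $6$ is harmless.

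The only step that is not completely mechanical is the first one: verifying that iterating $\ell_\omega$ never carries a point out of $[\tfrac14,\tfrac34]$, so that the circle dynamics on $I_0$ is legitimately computed by an affine self-map of a fixed interval and the contraction rate $\tfrac12$ and the fixed point $X_\omega$ really are the relevant quantities. This is precisely where the standing assumption $\epsilon<\tfrac18$ enters; once it is in place, the remainder is a one-line geometric-series estimate and I do not anticipate any further obstacle.
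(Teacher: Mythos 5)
Your proof is correct, and it takes a somewhat different route from the paper's. The paper works directly with the circle metric: at each step it compares $f^{(n)}_{\omega}(x)$ with the fixed point $X_{\theta^{n-1}\omega}$ of the \emph{most recently applied} map, uses the contraction rate $\tfrac12$ on $I_0$ plus $d_M(X_{\theta^{n-1}\omega},X_{\omega'})\le 2\epsilon|\kappa(\theta^{n-1}\omega)-\kappa(\omega')|$, iterates to get a weighted sum of consecutive differences $|\kappa(\theta^{n-j-1}\omega)-\kappa(\theta^{n-j}\omega)|$, and then re-triangulates each of these against $\kappa(\omega')$ --- this last step is exactly what produces the constant $6\epsilon$. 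You instead lift to the universal cover, observe that on $[\tfrac14,\tfrac34]$ each $f_\omega$ is literally the affine contraction $\ell_\omega(t)=\tfrac12 t+\tfrac14+\epsilon\kappa(\omega)$ (the check that $\epsilon<\tfrac18$ keeps the orbit in the interval is the right point to flag, and your verification is fine), and solve the resulting linear recursion exactly, comparing at every step directly with $X_{\omega'}$ rather than with the intermediate fixed points. This avoids the second triangle inequality and yields the sharper bound $\tfrac1{2^{n+1}}+2\epsilon\max_{0\le j\le n-1}|\kappa(\theta^j\omega)-\kappa(\omega')|$, which implies \eqref{eq:lem1}; the only price is the preliminary bookkeeping that the lift intertwines with $f_\omega$ and that $X_{\omega'}=\tfrac12+2\epsilon\kappa(\omega')$ lies in $[\tfrac14,\tfrac34]$, both of which you handle correctly (including the trivial $n=0$ case). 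Either argument suffices for the use made of the lemma, since only the geometric decay and some fixed multiple of $\epsilon$ times the $\kappa$-oscillation matter later.
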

\begin{proof}
Fix  $n\in \N_0$, $x\in I_0$ and $\omega ,\omega ^\prime \in \Omega$. 
Noting that $X_{\omega ^\prime }=\pi _{M}(\frac{1}{2}+2\epsilon \kappa(\omega ^\prime))$ together with \eqref{eq:basic}, we have
\begin{align*}
d_{M}\left(f^{(n)}_{\omega } (x)  ,X _{\omega ^\prime }\right)
&\leq d_{M}\left( f_{\theta ^{n-1} \omega }( f^{(n-1)}_{\omega } (x) ) , f_{\theta ^{n-1}\omega } (X_{\theta ^{n-1} \omega })\right) + d_{M}\left( X_{\theta ^{n-1}\omega }, X_{\omega ^\prime}\right)\\
&\leq \frac{1}{2}d_{M}\left( f^{(n-1)}_{ \omega } (x)  , X _{\theta ^{n-1} \omega }\right) +2\epsilon \left\vert \kappa(\theta ^{n-1}\omega )-\kappa(\omega ^\prime)\right\vert .
\end{align*}
Reiterating this argument, we finally get that $d_{M}\left( f^{(n)} _{\omega } (x)   ,X _{\omega ^\prime}\right)$ is bounded by
\begin{align*}
\frac{1}{2^n}d_{M}\left( x  , X _{\omega }\right) +2\epsilon \left(\sum _{j=1}^{n-1}\frac{\vert \kappa(\theta ^{n-j-1} \omega )-\kappa(\theta ^{n-j}\omega )\vert }{2^j} +\left\vert \kappa(\theta ^{n-1}\omega )-\kappa (\omega ^\prime ) \right\vert \right).
\end{align*}
Hence,  the conclusion follows from the triangle inequality
$$\vert \kappa(\theta ^{n-j-1}\omega )-\kappa(\theta ^{n-j}\omega )\vert\leq \vert \kappa(\theta ^{n-j-1}\omega )-\kappa(\omega ')\vert
+\vert \kappa(\omega ')-\kappa(\theta ^{n-j}\omega )\vert.$$
This completes the proof.
\end{proof}

We continue the proof of Theorem \ref{thm:main}.
We let $V(X_p)$ and  $V(X_{\hat p})$ be the $\rho _0$-neighbourhoods of 
$X_p$, $X_{\hat p}$ in $M$, respectively, with $\rho _0 =\frac{\epsilon\vert \kappa(p) -\kappa (\hat p) \vert }{2}$. 
By \eqref{eqn_XpXp}, $V(X_p)\cap V(X_{\hat p})=\emptyset$.
Fix a positive integer $\nu _0$ satisfying
\begin{equation}\label{eq:e0324a}
\frac{1}{2^{\nu _0} }\leq \frac{\rho _0}{3}.
\end{equation}
Furthermore, we let $\delta _0^\prime $ be a positive number such that $d_\Omega (\omega ,p_1 )<\delta _0^\prime $ implies $\vert \kappa (\omega )-\kappa (p_1) \vert \leq  \frac{\rho _0 }{18} $ with $p_1 =p$ and $\hat p$, and set
\begin{equation}\label{eqn_delta}
\delta _0=\min\left\{\delta _0^\prime ,\frac{\mathrm{dist}_\Omega (p,\partial \Omega)}2, \frac{\mathrm{dist}_\Omega (\hat p,\partial \Omega)}2\right\} ,
\end{equation}
so that $U_{\delta _0}(p)\cap U_{\delta _0}(\hat p)=\emptyset$ 
and  $(U_{\delta _0}(p)\cup U_{\delta _0}(\hat p))\cap \partial \Omega=\emptyset$.

Let $j$ be in a  $\nu _0$-trapped period of $\omega \in \Omega $ for $U_{\delta _0} ( p )$. 
Then, 
we have
 \begin{equation}\label{eqn_dSfX2}
  \max _{0\leq i \leq \nu _0 } d_\Omega (\theta ^{j -i}\omega , p )\leq \delta _0 .
\end{equation}
 On the other hand,  applying \eqref{eq:lem1} with $n ,x, \omega$  and  $\omega ^\prime$ replaced by $\nu _0$, $f^{(j-\nu _0)}_{\omega }(x)$, $\theta ^{j-\nu _0}\omega$ and $p$ together with \eqref{eq:basic}, we have
\[
d_{M}(f_{\omega }^{(j)} (x), X_{p}) \leq \frac{1}{2^{\nu _0}} +6\epsilon \max _{0\leq i \leq \nu _0 } \left\vert \kappa(\theta ^{j -i}\omega ) -\kappa (p )\right\vert 
\]
for all $x\in I_0$.
Therefore, it follows from \eqref{eq:e0324a}, \eqref{eqn_delta}
 and \eqref{eqn_dSfX2} that 
\[
d_{M}(f_{\omega }^{(j)} (x), X_{p}) \leq \frac{2}{3} \rho _0 ,
\]
that is, $f_{\omega }^{(j)} (x) \in V(X_p)$. 
A similar argument implies that  if $j$ is in a  $\nu _0$-trapped period of $\omega \in \Omega $ for $U_{\delta _0} (\hat p)$, then $f_{\omega }^{(j)} (x) \in V(X_{\hat p})$ for any $x\in I_0$.

We assume that $\lambda _1<\lambda _2$ without loss of generality. 
Let $\varphi _0 :M \to [0,1] \subset \R $ be a nonnegative-valued continuous function  such that $\varphi _0(x) =1$ if  $x$ is in $V(X_p)$ and $\varphi _0(x)=0$ if $x$ is in $V(X_{\hat p})$. 
For each  $x\in I_0$ and $\omega \in \Gamma$, by the condition (H)  together with observation in the previous paragraph, we have
\[
\frac{1}{n_1(J)} \sum _{j=0}^{n_1(J)} \varphi _0 (f_\omega ^{(j)}(x)) 
\geq \frac{ \# \{ 0\leq j\leq n_1(J) \mid f^{(j)}_\omega (x) \in V(X_p)\} }{n_1(J)} \to 1- \lambda _1
\]
and
\[
\frac{1}{n_2(J)} \sum _{j=0}^{n_2(J)} \varphi _0 (f_\omega ^{(j)}(x)) 
\leq 1- \frac{ \# \{ 0\leq j\leq n_2(J) \mid f^{(j)}_\omega (x) \in V(X_{\hat p})\} }{n_2(J)} \to 1- \lambda _2
\]
as $J\to \infty$.
Therefore, we get
\[
 \liminf _{n\to \infty}\frac{1}{n} \sum ^{n}_{j=0} \varphi  _0(f_{\omega } ^{(j)} (x) )\leq 1-\lambda _2 <1 - \lambda _1 \leq \limsup _{n\to \infty}\frac{1}{n} \sum ^{n}_{j=0} \varphi _0(f_{\omega } ^{(j)} (x) )
\]
for all $(\omega ,x)$ in $\Gamma \times I_0$. 
This completes the proof of Theorem \ref{thm:main}.

\subsection{Proof of Theorem \ref{thm:Bowen}}\label{subsection:Bowen}

It is mentioned in \cite{Takens1994} that Bowen considered a surface flow $\{ \theta ^t \} _{t\in \mathbb R}$ 
 generated by a smooth (at least $\mathscr C ^3$) vector field with two saddle points $p$ and $\hat p$ and two heteroclinic orbits $\gamma $ and $\hat \gamma $ connecting the points, which are included in the unstable and stable manifolds of $p$ respectively, such that the closed curve $\gamma := p \sqcup \hat p \sqcup \gamma  \sqcup \hat \gamma $ is 
  attracting in the following sense: if we denote  the expanding and contracting eigenvalues of the linearised vector field around $p$ by $\alpha _+$ and $-\alpha _-$, and  the ones around $\hat p$ by $\beta _+$ and $- \beta _-$, then
 $\alpha _- \beta _- > \alpha _+ \beta _+$. 
Let $\Gamma$ be the intersection of the  basin of attraction of $\gamma$ and the open set surrounded by  $\gamma$, which is a nonempty open set of $\Omega$. 
Furthermore, we take sections $\Sigma $ and $\hat \Sigma $ transversally  intersecting $\gamma$ and $\hat \gamma $, respectively.
See Figure  \ref{Bowen-eye} for configuration.

\begin{figure}[hbtp]
\centering
\scalebox{0.6}{\includegraphics[clip]{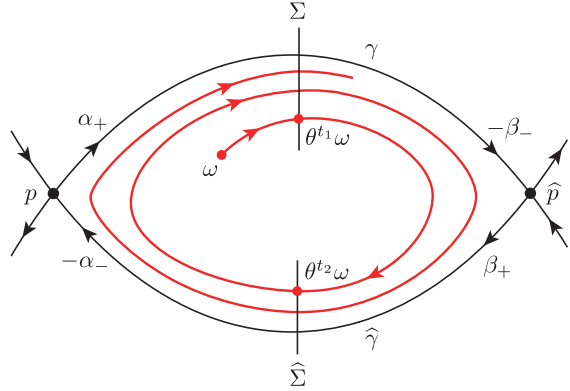}}
\caption{The Bowen flow.}
\label{Bowen-eye}
\end{figure}

Fix $\omega  \in \Gamma$. 
Let $\{ t_j \} _{j\geq 1}$ be  successive times at which the forward orbit of $\omega $ by $\{ \theta ^t\} _{t\in \mathbb R}$ intersects $\Sigma $ and $\hat \Sigma $. 
By taking the sections smaller, one can assume that $\theta^{t_{j}}\omega  \in \Sigma $ if $j$ is odd and $\theta^{t_{j}}\omega \in \hat \Sigma $ if $j$ is even. 
Let $T^{(p)}_j= t _{2j+1}- t_{2j}$ and $T^{(\hat p)}_j= t_{2j}- t_{2j-1}$.
It was shown in 
 \cite{Takens1994} that 
\begin{equation}\label{eq:rrrr1}
\lim _{j\to \infty } \frac{T^{(p)}_{j} }{T^{(\hat p)} _{j}} = \sigma _1, \quad \lim _{j\to \infty } \frac{T^{(\hat p)} _{j+1}}{T^{( p)} _{j}} = \sigma _2
\end{equation}
with $\sigma _1= \frac{\beta _-}{\alpha _+}$ and $\sigma _2= \frac{\alpha _-}{\beta _+}$,
 and 
%
 \begin{equation}\label{eq:rrrr1b}
 \lim _{j\to \infty} \frac{T^{(p)}_{\delta ,j}}{T^{(p)}_j} = \lim _{j\to \infty} \frac{T^{(\hat p)}_{\delta ,j} }{T^{(\hat p)}_j} =1
 \end{equation}
 for each $\delta  > 0$, where   
  $T^{(p)}_{\delta ,j}$ and $ T^{(\hat p)}_{\delta ,j}$ are the lengths of  $ \{ t_{2j} \leq t\leq t_{2j+1} \mid \theta ^t \omega \in U_\delta (p) \}$ and $ \{ t_{2j-1} \leq t\leq t_{2j} \mid \theta ^t \omega \in U_\delta (\hat p) \}$, respectively,  
when the lengths are well-defined (in particular, for each sufficiently large $j$).
%

 
 Let $n_1(J) = [\hat T_{2J-1}]$ and $n_2(J) = [\hat T_{2J}]$ with  the notation $[t]$ for the integer 
part of $t\in \mathbb R$. 
 Given $\delta >0$ and $\nu \geq 0$, let $J_0$ be an integer such that $\min \{T_{\delta , J_0}^{(p)} , T_{\delta , J_0} ^{(\hat p)} \} \geq \nu +2$.
Then, for any $J\geq J^\prime \geq J_0$, we have
\begin{align*}
\frac{N_\nu (\omega ,U_\delta (p) ; n_1(J)  )}{ n_1(J)   } &\geq \frac{\sum _{j=J^\prime } ^J (T_{\delta ,j} ^{(p)} -2 - \nu )}{ \sum _{j=1} ^J (T_j^{(p)} + T_j^{(\hat p)}  ) } \\
&=(1-Z_1) \cdot \frac{\sum _{j=J^\prime } ^J T_{\delta ,j} ^{(p)} }{ \sum _{j=J^\prime } ^J (T_j^{(p)} + T_j^{(\hat p)}  ) }  -Z_2 ,
\end{align*}
where $Z_1= \sum _{j=1}^{J^\prime -1} (T_j^{(p)} +T^{(\hat p)} _j)  / \sum _{j=1}^{J} (T_j^{(p)} +T^{(\hat p)} _j)  $ and $Z_2 =(2+\nu) /\sum _{j=1}^{J} (T_j^{(p)} +T^{(\hat p)} _j)$, both of which go to $0$ as $J\to \infty$ for any fixed $J^\prime$.  
Hence, by \eqref{eq:rrrr1} and \eqref{eq:rrrr1b}, it is straightforward to see that for any $\tilde \epsilon >0$, there is an integer $J_{\tilde \epsilon } \geq J_0$ such that for each $J\geq J_{\tilde \epsilon}$,
\[
\frac{N_\nu (\omega ,U_\delta (p) ; n_1(J)  )}{ n_1(J)   } \geq (1-\tilde \epsilon )\cdot \frac{(\sigma _1-\tilde \epsilon) T_{J} ^{(\hat p)}\sum _{j=0} ^{J-J_{\tilde \epsilon} } \{ (\sigma _1 -\tilde \epsilon )(\sigma _2 -\tilde \epsilon ) \} ^{-j}}{(1+(\sigma _1+\tilde \epsilon ) ) T_{J} ^{(\hat p)}\sum _{j=0} ^{J-J_{\tilde \epsilon} } \{ (\sigma _1 +\tilde \epsilon )(\sigma _2 +\tilde \epsilon ) \} ^{-j}} -\tilde \epsilon .
\]
Since $\tilde \epsilon$ is arbitrary, we have
\[
\liminf _{J\to \infty} \frac{N_\nu (\omega ,U_\delta (p) ; n_1(J)  )}{ n_1(J)   } \geq \frac{\sigma _1}{1+\sigma _1} .
\]


In a similar manner we can show that
\[
\liminf _{J\to \infty} \frac{N_\nu (\omega ,U_\delta (\hat p) ; n_1(J)  )}{ n_1(J)   } \geq \frac{1}{1+\sigma _1}.
\]
and that
\[
\liminf _{J\to \infty} \frac{N_\nu (\omega ,U_\delta (p) ; n_2(J)  )}{ n_2(J)   } \geq \frac{1}{1+\sigma _2} , \quad \liminf _{J\to \infty} \frac{N_\nu (\omega ,U_\delta (\hat p) ; n_2(J)  )}{ n_2(J)   } \geq \frac{\sigma _2}{1+\sigma _2} .
\]
This completes the proof of Theorem \ref{thm:Bowen} with $\lambda _1 = \frac{1}{1+\sigma _1}$ and $\lambda _2 = \frac{\sigma _2}{1+\sigma _2}$.

%
%
%

\subsection{Proof of Theorem \ref{thm:Newhouse}}

In \cite{KS2017}, we have actually shown that, for sufficiently large positive integers $z_0$, $n_0$, $k_0$, 
there exists an element $\theta=\theta_{\boldsymbol{z}}$ in any small neighbourhood  of any $\Ci ^{\tilde r}$ diffeomorphism in the Newhouse open set $\mathcal N$ 
associated with any sequence $\boldsymbol{z}
=\{z_k\}_{k=k_0}^\infty$ of integers each entry of which is either $z_0$ or $z_0+1$ 
and there exists a sequence $\{R_k\}_{k=k_0}^\infty$ of mutually disjoint rectangles in $\Omega$ with $\mathrm{Int}R_{k_0}=\Gamma$ 
and satisfying the following conditions.
\begin{enumerate}
\renewcommand{\theenumi}{C\arabic{enumi}}
\item\label{R1}
$\lim_{k\to\infty}\mathrm{diam}\,(R_k)=0$.
\item\label{R2}
There are sequences $\{a_k\}_{k=k_0}^\infty$, $\{b_k\}_{k=k_0}^\infty$ of positive integers 
with
\begin{equation*}
\limsup_{k\to\infty}\dfrac{a_k}{k}<\infty\quad\text{and}\quad\limsup_{k\to\infty}\dfrac{b_k}{k}<\infty
\end{equation*}
and such that, for any $\delta >0$ and \color{black} $\omega \color{black} \in R_k$ with sufficiently large $k\geq k_0$, 
\begin{itemize}
\item
$\theta^{a_k+n_0+j}\color{black} \omega \color{black} \in U_\delta(p)$ if $j\in \{0,\dots,z_kk^2-2n_0\}$,
\item
$\theta^{a_k+n_0+z_kk^2+j}\color{black} \omega \color{black} \in U_\delta(\hat p)$ if $j\in \{0,\dots,k^2-2n_0\}$, 
\item
$\theta^{m_k}\color{black} \omega \color{black} \in \mathrm{Int}R_{k+1}$ 
for $m_k=(z_k+1)k^2+a_k+b_k$.
\end{itemize}
\end{enumerate}
\color{black} Furthermore, $n_0$, $k_0$, $\{a_k\}_{k=k_0}^\infty$ and $\{b_k\}_{k=k_0}^\infty$ can be taken independently of $\boldsymbol{z}
=\{z_k\}_{k=k_0}^\infty$. \color{black}
See Figure \ref{Fig_3} for the situation.
\begin{figure}[hbtp]
\centering
\scalebox{0.6}{\includegraphics[clip]{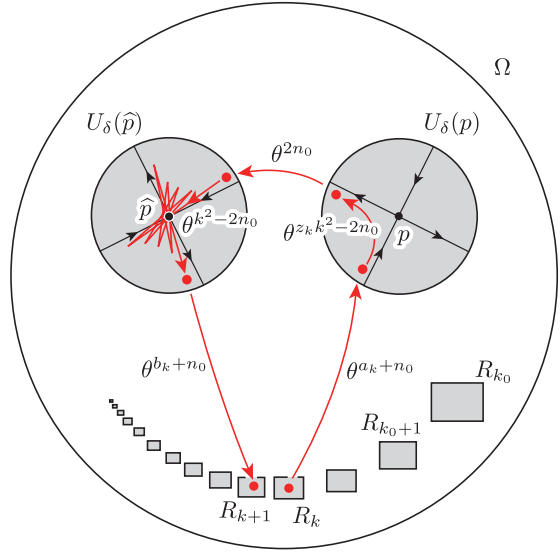}}
\caption{Travel from $R_k$ to $\mathrm{Int}R_{k+1}$ via $\theta^{m_k}$.
The case where the eigenvalues of $D\theta(p)$ are positive and those of $D\theta(\hat p)$ are negative.}
\label{Fig_3}
\end{figure}

 For a given monotone increasing sequence $\{ k(J^\prime ) \}_{J^\prime =1} ^\infty$ of integers with $k(1) >k_0$, 
the sequence $\boldsymbol{z}=\{ z_k\} _{k=k_0}^{\infty}$ is constructed to   satisfy 
\begin{equation*}
z_k=z_0\quad\text{if $J'$ is odd\quad and}\quad z_k=z_0+1\quad\text{if $J'$ is even}
\end{equation*}
for any $k(J'-1)< k\leq k (J')$.

Now we will show that the sequence $\{ k(J^\prime )\}_{J^\prime =1}^\infty$ can be taken so that 
the following inequality holds:  For any $\nu \geq 0$ and $\delta >0$, there is an integer  $J^\prime _0\geq 1$ such that if $J^\prime \geq J^\prime _0$, then 
\begin{equation}\label{eq:e0326c}
\frac{N_{\nu  } (\omega , U_\delta (p) ; \hat m_{k(J')})}{\hat m_{k(J')}} \geq \frac{z_*}{z_*+1} -2^{-J'},
\end{equation}
where $z_*=z_0$ if $J^\prime $ is odd and $z_*=z_0+1$ if $J^\prime $ is even. 
It follows from (C2) 
 that 
\begin{align*}
\label{eq:e0326b} 
\frac{N_{\nu  } (\omega , U_\delta (p) ; \hat m_{k(J')})}{\hat m_{k(J')}} &\geq \frac{\sum _{k= k(J'-1)+1}^{k(J')}  (z_k k^2 -2n_0 -\nu ) }{\hat m_{k(J')}}\\
&=  \frac{\sum _{k= k(J'-1)+1}^{k(J')}   (z_k k^2 -2n_0 -\nu ) }{\sum _{k= k(J'-1)+1}^{k(J')}   ((z_k+1)k^2 +a_k +b_k)} \left(1 - \frac{\hat m_{k (J' -1)}}{\hat m_{k(J')}}\right),
\end{align*}
for each $\nu \geq 0$, $\delta >0$ and sufficiently large $J^\prime \geq 1$.
On the other hand, it is easy to check that
\[
\frac{\sum _{k= k(J'-1)+1}^{k(J')}   (z_k k^2 -2n_0 -\nu ) }{\sum _{k= k(J'-1)+1}^{k(J')}   ((z_k+1)k^2 +a_k +b_k)} 
= \frac{z_* -Z_1}{z_*+1+Z_2},
\]
where $Z_1=\sum _{k=k(J'-1)+1}^{k(J')} (2n_0+\nu) / \sum _{k=k(J'-1)+1}^{k(J')} k^2$ and $Z_2 =\sum _{k=k(J'-1)+1}^{k(J')} (a_k + b_k )/\sum _{k=k(J'-1)+1}^{k(J')} k^2$. 
By taking $k(J^\prime )$ sufficiently larger than $k(J^\prime -1)$, one can suppose that 
all of  $\hat m_{k(J^\prime -1)}/\hat m_{k(J^\prime )}$, $Z_1$ and $Z_2$ are arbitrarily close to zero.
Thus  
there exists a sequence 
$\{ k(J^\prime )\}_{J^\prime =1}^\infty$ satisfying \eqref{eq:e0326c}.

In a similar manner, we also can get that
\begin{equation}\label{eq:e0326d}
\frac{N_{\nu  } (\omega , U_\delta (\hat p) ; \hat m_{k(J')})}{\hat m_{k(J')}}  \geq \frac{\sum _{k=k(J'-1)+1}^{k(J')} \left( {k}^2-2n_0 -n_1 \right)}{\hat m_{k(J')}} \geq \frac{1}{z_*+1}-2^{-J'}.
\end{equation}
Since $\frac{z_*}{z_*+1}+\frac{1}{z_*+1} =1$, \eqref{eq:e0326c} and \eqref{eq:e0326d} completes the proof of Theorem \ref{thm:Newhouse}  with $\lambda _1= \frac{1}{z_0+1}$, $\lambda _2= \frac{1}{z_0+2}$ and $n_1(J) =\hat m_{k(2J-1)}$, $n_2(J) =\hat m_{k(2J)}$. 


\appendix
\section{The definition of absolute continuity}\label{appendix:araujo}
In this appendix,  we compare Theorem \ref{thm:main} with Ara\'ujo's result in  \cite{Araujo2000} from the viewpoint of absolute continuity of a parametrised family of $\Ci^r$ diffeomorphisms. 
A \emph{parametrised family} $\tilde f$ of $\Ci^r$ diffeomorphisms  on a closed smooth Riemannian manifold $M$ is a differential mapping  from $B \times M$ to $M$ such that $\tilde f_t \equiv  \tilde f(t,\cdot ):M\to M$ is  a $\Ci ^r $ diffeomorphism for all $t\in B$, where $B$ is the unit ball  of a Euclidean space. 
Let  $(B^{\N _0}, \mathcal B(B)^{\N _0} , \mathrm{Leb} _{B} ^{\N _0})$ be  the product space  of a probability space $(B , \mathcal B(B), \mathrm{Leb} _{B})$, where  $\mathcal B(B)$  is  the Borel $\sigma$-field of $B$ and $\mathrm{Leb} _{B}$  is the normalised Lebesgue measure on $B$. 
For each $n\geq 1$, $\bar t=(t_0,t_1,\ldots )\in B^{\N _0}$ and $x\in M$, we define $\tilde f^{(n)}_{\bar t}(x)$  by
\[
\tilde f^{(n)}_{\bar t}(x) = \tilde f_{t_{n-1}} \circ \tilde  f_{t_{n-2}} \circ \cdots \circ \tilde  f_{t_0} (x),
\]
and let $\tilde f^{(n)}_{\bar t} =\mathrm{id} _M$ for all $\bar t\in B^{\N _0}$.
Let  $\mathrm{Leb} _M$   be the normalised Lebesgue measure on $M$.
The following  condition is from \cite[Theorem 1]{Araujo2000}. 
\begin{dfn}
 Let $\tilde f:B\times M\to M$ be 
a parametrised family of $\Ci^r$ diffeomorphisms. 
 We say that $\tilde f$ is 
\emph{absolutely continuous}  if  there exists an integer $N\geq 1$ and a real number $\xi >0$ such that for all $n\geq N$ and $x\in M$,
\begin{align}\label{eq:uniform}
&\text{$\{\tilde f_{\bar t} ^{(n)}(x) \mid \bar t \in B^{\N _0}   \}$ contains   the ball with radius $\xi $ centred at $f_0^n(x)$}, \\
&\label{eq:uniform2} \text{$\left(\tilde f_{(\cdot )} ^{(n)}(x)\right)_* \mathrm{Leb} _B^{\mathbb N_0}$ is absolutely continuous with respect to $\mathrm{Leb} _M$,} 
\end{align}
where $0$ is the centre of $B$ and 
$(\tilde f_{(\cdot)} ^{(n)}(x))_* $ is the pushforward of measures by  $\tilde f_{(\cdot)} ^{(n)}(x):B^{\N _0} \to M$ (the measurability of $\tilde f_{(\cdot)} ^{(n)}(x)$ is ensured by \cite[Property 2.1]{Araujo2000}).
\end{dfn}
Note that  the deterministic case  (i.e.,  the case when $\tilde f_t =\tilde f_0$ for all $t \in B$)  is excluded by assuming that $\tilde f$ is absolutely continuous. 

Let $\theta :B^{\N _0} \to B^{\N _0}$ be  the \emph{one-sided shift}, i.e., a measurable mapping given by $\theta (\bar t)  =  (t_1, t_2,\ldots )$ for each $\bar t =(t_0,t_1,\ldots )$.
Given a parametrised family $\tilde f : B\times M\to M$ of $\Ci^r$ diffeomorphisms, we define a mapping $F: \N _0 \times B^{\N _0} \times M \to M$  by 
\[
F(n,\bar t ,x)= \tilde f^{(n)}_{\bar t}(x), 
\quad (n,\bar t ,x) \in \N _0 \times B^{\N _0} \times M.
\]
Then, it is straightforward to see that 
 $F$ is an NDS over $\theta$ on base space $(\Omega , \mathcal F,\mathbb P)=(B^{\N _0}, \mathcal B(B)^{\N _0} , \mathrm{Leb} _{B} ^{\N _0})$.
Notice that the $B$-valued random process $\{ \omega =(t_0,t_1,\ldots ) \mapsto t_n\} _{n\geq 0}$ on $(\Omega , \mathcal F , \mathbb P)$ is independent and identically distributed, so that  we call $F$  an \emph{i.i.d.~nonautonomous dynamical system} of $\tilde f$.
We also note that $\theta$ is measure-preserving, i.e., an i.i.d.~NDS is a random dynamical system.

 The following theorem is an immediate consequence of \cite[Theorem 1]{Araujo2000}.
\begin{thm}[Ara\'ujo]\label{thm:araujo}
Let  $F$ be an  i.i.d.~nonautonomous dynamical system of a parametrised family $\tilde f$ of $\Ci ^r$ diffeomorphisms. 
Suppose that $\tilde f$ is absolutely continuous.
Then  for $\mathbb P $-almost every $\omega \in \Omega$, the set of points with  historic behaviour at $\omega$ is a zero Lebesgue measure set.
\end{thm}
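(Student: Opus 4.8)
The plan is to deduce Theorem~\ref{thm:araujo} from \cite[Theorem~1]{Araujo2000} essentially by a change of notation; indeed, the definition of absolute continuity in this appendix was extracted from \cite[Theorem~1]{Araujo2000} precisely so that this reduction is immediate. First I would note that the i.i.d.\ nonautonomous dynamical system $F$ of $\tilde f$ over the one-sided shift $\theta$ on $(\Omega,\mathcal F,\mathbb P)=(B^{\N_0},\mathcal B(B)^{\N_0},\mathrm{Leb}_B^{\N_0})$ is, by \eqref{eq:basic} and the construction above, exactly the random perturbation of $f_0$ by independent choices from the parametrised family $\tilde f$ studied in \cite{Araujo2000}, and that \eqref{eq:uniform}--\eqref{eq:uniform2} are literally the non-degeneracy and absolute-continuity hypotheses required there. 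So this step is pure bookkeeping: one checks line by line that no additional hypothesis is hidden in \cite[Theorem~1]{Araujo2000}.

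Next I would observe that the notion of historic behaviour at $\omega\in\Omega$ for $F$ from Definition~\ref{dfn:hb} --- nonconvergence of $\frac{1}{n}\sum_{j=0}^{n-1}\varphi(\tilde f^{(j)}_{\omega}(x))$ for some $\varphi\in C^0(M)$ --- is exactly the failure of time averages along the random orbit $(\tilde f^{(j)}_{\omega}(x))_{j\ge0}$ whose Lebesgue measure is controlled by \cite[Theorem~1]{Araujo2000}. It is convenient to record here the elementary reduction to a countable dense set $\{\varphi_k\}_{k\ge1}\subset C^0(M)$: writing $A_n\varphi(\omega,x)=\frac{1}{n}\sum_{j=0}^{n-1}\varphi(\tilde f^{(j)}_{\omega}(x))$, one has $|A_n\varphi-A_n\psi|\le\|\varphi-\psi\|_{\infty}$ pointwise, so convergence of $(A_n\varphi_k)_n$ for every $k$ forces convergence of $(A_n\varphi)_n$ for every $\varphi\in C^0(M)$. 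Hence the bad set $H=\{(\omega,x)\in\Omega\times M : x \text{ has historic behaviour at }\omega\}$ equals $\bigcup_{k\ge1}\{(\omega,x) : \limsup_n A_n\varphi_k(\omega,x)>\liminf_n A_n\varphi_k(\omega,x)\}$, and since each $A_n\varphi_k$ is measurable in $(\omega,x)$ (the maps $\tilde f^{(n)}_{(\cdot)}(\cdot)$ being measurable, cf.\ \cite[Property~2.1]{Araujo2000}), $H$ is $\mathcal F\otimes\mathcal B(M)$-measurable.

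Finally I would invoke \cite[Theorem~1]{Araujo2000}, which for this random perturbation gives that $x$ has historic behaviour at $\omega$ only for $(\omega,x)$ in a $\mathbb P\times\mathrm{Leb}_M$-null set, i.e.\ $(\mathbb P\times\mathrm{Leb}_M)(H)=0$. Since $H$ is measurable, Fubini's theorem yields $\int_\Omega\mathrm{Leb}_M(H_\omega)\,d\mathbb P(\omega)=0$, hence $\mathrm{Leb}_M(H_\omega)=0$ for $\mathbb P$-almost every $\omega$, which is exactly the conclusion of Theorem~\ref{thm:araujo}. (If \cite[Theorem~1]{Araujo2000} is already phrased fibrewise, as ``for $\mathbb P$-a.e.\ $\omega$, Lebesgue-a.e.\ $x$ has well-defined time averages'', the conclusion is immediate and this last step is unnecessary.) The only genuine obstacle is the first step --- verifying that ``absolutely continuous'' as formalised here meets \emph{all} hypotheses of \cite[Theorem~1]{Araujo2000} verbatim, so that the deduction is truly immediate; the measurability bookkeeping and the Fubini argument are routine.
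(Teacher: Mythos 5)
Your proposal matches the paper's treatment: the paper gives no separate proof, stating only that the theorem is an immediate consequence of \cite[Theorem 1]{Araujo2000}, which is exactly the reduction you carry out. Your additional remarks on the countable dense family of test functions, measurability of the bad set, and the Fubini step are routine bookkeeping consistent with that citation, so the approach is essentially the same.
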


We show that our parametrised perturbation is also absolutely continuous. 
In order to avoid notational confusion, we introduce another form for the mapping given in \eqref{eq:defoffepsilon}.
Let $M$ be the circle $\Sone =\R /\Z$ and $B$ the unit disk of a Euclidean space. We define a parametrised perturbation $\tilde f : B\times M\to M$ of $\Ci ^r$ diffeomorphisms by
\begin{equation}\label{eq:n2}
\tilde f(t,x) = f_0(x) +\epsilon \kappa(t) \mod 1, \quad (t,x) \in B\times M,
\end{equation}
where $f_0$ is the $\Ci ^r$ mapping given in \eqref{eq:n1} and $\kappa :B\to [-1,1]$ is a surjective continuous function such that $\kappa _* \mathrm{Leb} _B$ is absolute continuous with respect to $\mathrm{Leb} _M$. 
\begin{prop}\label{prop:11}
Let $\tilde f$ be a parametrised family of $\Ci ^r$ diffeomorphisms given in  \eqref{eq:n2}. 
Then $\tilde f$ is absolutely continuous. 
\end{prop}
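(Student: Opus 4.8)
The plan is to check the two defining conditions \eqref{eq:uniform} and \eqref{eq:uniform2} directly, with $N=1$ and $\xi=\epsilon$. The structural fact I would use in both parts is that, for $n\geq 1$, $\tilde f^{(n)}_{\bar t}(x)$ depends on $\bar t=(t_0,t_1,\dots)$ only through $(t_0,\dots,t_{n-1})$ and satisfies
\[
\tilde f^{(n)}_{\bar t}(x)=f_0\bigl(\tilde f^{(n-1)}_{\bar t}(x)\bigr)+\epsilon\kappa(t_{n-1})\bmod 1,
\]
where the first summand depends only on $(t_0,\dots,t_{n-2})$ and the second only on $t_{n-1}$. For \eqref{eq:uniform}: since $\kappa\colon B\to[-1,1]$ is surjective, pick $t_*\in B$ with $\kappa(t_*)=0$, so that $\tilde f_{t_*}=f_0$; fixing $n\geq1$ and $x\in M$ and taking $\bar t$ with $t_0=\dots=t_{n-2}=t_*$ and $t_{n-1}$ arbitrary, the identity gives $\tilde f^{(n)}_{\bar t}(x)=f_0^n(x)+\epsilon\kappa(t_{n-1})\bmod1$, which as $t_{n-1}$ runs over $B$ sweeps out $\{\,f_0^n(x)+\epsilon s:s\in[-1,1]\,\}$. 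Since $2\epsilon<1$, this arc is precisely the closed metric ball of radius $\epsilon$ about $f_0^n(x)$, proving \eqref{eq:uniform} with $N=1$, $\xi=\epsilon$.

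For \eqref{eq:uniform2}, I would note that under $\mathrm{Leb}_B^{\N_0}$ the coordinate block $(t_0,\dots,t_{n-2})$ is independent of $t_{n-1}$, so by the displayed identity the law of $\tilde f^{(n)}_{\bar t}(x)$ on $M$ is the convolution of the law of $f_0(\tilde f^{(n-1)}_{\bar t}(x))$ with the law $\mu_\epsilon$ of the map $t\mapsto\epsilon\kappa(t)\bmod1$. By hypothesis $\kappa_*\mathrm{Leb}_B\ll\mathrm{Leb}_{\mathbb R}$, and $s\mapsto\epsilon s\bmod1$ is injective on $[-1,1]$ because $2\epsilon<1$, so $\mu_\epsilon\ll\mathrm{Leb}_M$. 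Since the convolution on $M$ of an arbitrary Borel probability measure with an absolutely continuous one is again absolutely continuous — its density being the average of the translated density of $\mu_\epsilon$ — this gives \eqref{eq:uniform2} for every $n\geq1$. Measurability of $\tilde f^{(n)}_{(\cdot)}(x)$ follows from the continuity of $\tilde f$.

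I do not expect a genuine obstacle; the one point worth emphasising is that the argument uses no information about the law of $\tilde f^{(n-1)}_{\bar t}(x)$, which may be highly singular, because applying the noise at the single final step already renders the resulting distribution absolutely continuous. The only inputs are the surjectivity of $\kappa$ (for the covering property \eqref{eq:uniform}), the hypothesis $\kappa_*\mathrm{Leb}_B\ll\mathrm{Leb}_{\mathbb R}$ together with $\epsilon<\tfrac12$ (so that the one-step noise is an absolutely continuous measure on the circle), and the independence of distinct coordinates of $\bar t$ under $\mathrm{Leb}_B^{\N_0}$.
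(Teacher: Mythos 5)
Your argument is correct, and it splits into two halves of unequal novelty. For \eqref{eq:uniform} you do exactly what the paper does: take $N=1$, $\xi=\epsilon$, freeze all but the last coordinate so that the deterministic dynamics acts for $n-1$ steps, and let the final coordinate sweep out an arc of radius $\epsilon$ via the surjectivity of $\kappa$ (your choice of $t_*$ with $\kappa(t_*)=0$ centres this arc at $f_0^n(x)$ with $f_0$ from \eqref{eq:n1}, whereas the definition's centre is the iterate of the parameter-$0$ map $\tilde f_0=f_0+\epsilon\kappa(0)$; the two coincide when $\kappa(0)=0$, and your treatment is at the same level of precision as the paper's own). For \eqref{eq:uniform2}, however, your route is genuinely different: the paper argues by induction on $n$, with your one-step computation as the base case, and in the inductive step disintegrates over the final coordinate, using that each $\tilde f_t$ is a diffeomorphism (so $\tilde f_t^{-1}$ sends Lebesgue-null sets to null sets) to transfer absolute continuity of the time-$k$ law to time $k+1$. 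Your convolution argument dispenses with the induction entirely: since $t_{n-1}$ is independent of $(t_0,\dots,t_{n-2})$ under $\mathrm{Leb}_B^{\N_0}$ and enters additively mod $1$, the time-$n$ law is the convolution of an arbitrary measure with the absolutely continuous one-step noise $\mu_\epsilon$, hence absolutely continuous, regardless of how singular the time-$(n-1)$ law may be; the only inputs are $\kappa_*\mathrm{Leb}_B\ll\mathrm{Leb}_{\mathbb R}$ and $\epsilon<\tfrac12$ (for $\mu_\epsilon\ll\mathrm{Leb}_M$), and in particular no smoothness or invertibility of $f_0$ is needed. What each approach buys: yours is shorter, works verbatim for any measurable unperturbed map with additive absolutely continuous noise, and isolates the real mechanism (one-step smoothing); the paper's induction, by contrast, only uses nonsingularity of the one-step maps and so adapts to perturbations that do not act by translation, where no convolution structure is available.
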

We note that, although the parametrised family $\tilde f$ is absolutely continuous, the driving system of our NDS
 in Theorem \ref{thm:main} is  completely
different  from the driving system  of i.i.d.~NDS's (i.e.,~the one-sided shift) in the sense of historic behaviour (as  in Remark \ref{rmk:1}), which may cause the difference between our and Ara\'ujo's results.
\begin{proof}[Proof of Proposition \ref{prop:11}]
We first see that \eqref{eq:uniform} holds, so fix $n\geq 1$ and $x\in \Sone$. 
Due to that $\tilde f^{(n)}_{\bar t}(x) =\tilde f_{t_{n-1}}(\tilde f_{\bar t}^{(n-1)} (x))$ for each $\bar t \in B^{\N _0}$,  $\{ \tilde f^{(n)}_{\bar t}(x) \mid \bar t\in B^{\N _0}\}$ contains $\{\tilde f_t(\tilde f^{n-1}_0(x)) \mid t\in B\}$. 
Furthermore, by virtue of  \eqref{eq:n2}, $\{\tilde f_t(\tilde f^{n-1}_0(x)) \mid t\in B\}$ coincides with the ball with radius $\epsilon $ centred at $\tilde f_0^{n-1}(x)$. Therefore  \eqref{eq:uniform} holds  with  $N=1$ and $\xi =\epsilon $.

Arguing by induction, we first see that \eqref{eq:uniform2} holds for $n=1$. 
For any $x\in \Sone$ and Borel set $A \subset \Sone$, if we let $\frac{A-f_0(x)}{\epsilon} =\{ y\in \mathbb R \mid f_0(x) +\epsilon y \in A\}$, 
then 
\[
\left(\tilde f^{(1)}_{(\cdot)} (x)\right)_* \mathrm{Leb} _B ^{\N _0} (A) =\mathrm{Leb}_{B} \left( \left\{ t \in B \mid f_0(x)+\epsilon \kappa (t) \in \tilde A\right\} \right) 
\]
coincides with $\kappa _*\mathrm{Leb} _B(\frac{A-f_0(x)}{\epsilon})$. If $\mathrm{Leb} _{\mathbb R} (A)=0$, then obviously $\mathrm{Leb} _{\mathbb R} (\frac{A-f_0(x)}{\epsilon})=0$, so we get $\kappa _*\mathrm{Leb} _B(\frac{A-f_0(x)}{\epsilon}) =0$ due to the absolute continuity of $\kappa _*\mathrm{Leb} _B$. 
That is, \eqref{eq:uniform2} holds with $n=1$. 

Suppose that \eqref{eq:uniform2} holds for $n=k$. 
For any $x\in \Sone$ and Borel set $A\subset \Sone$, $(\tilde f^{(k+1)}_{(\cdot )}(x))_* \mathrm{Leb} _B ^{\N _0} (A)$ coincides with 
\begin{equation}\label{eq:rrrr3}
\int \mathrm{Leb} _B^{\N _0} \left(\left\{ \bar t  \in B^{\N _0} \mid \tilde f^{(k)} _{\bar t} (x) \in \tilde f_{t}  ^{-1} (A)\right\}\right)d\mathrm{Leb} _B (t).
\end{equation}
On the other hand, if $\mathrm{Leb} _{\mathbb R} (A)=0$, then $\mathrm{Leb} _{\mathbb R} (f_t^{-1} (A))=0$ for any $t\in B$ since $\mathrm{Leb} _{\Sone}(\tilde f_t ^{-1} (A)) \leq \sup _{x}\vert D\tilde f_t ^{-1}(x)\vert \mathrm{Leb} _{\Sone} (A)$. 
Hence, by the inductive step, we get $ \mathrm{Leb} _B^{\mathbb N_0}(\{ \bar t  \in B^{\N _0} \mid \tilde f^{(k)} _{\bar t} (x) \in \tilde f_{t}  ^{-1} (A) \} )=0$ for each $t\in B$, and  \eqref{eq:uniform2} with $n=k+1$ follows from \eqref{eq:rrrr3}. 
\end{proof}

\section{Proof of Proposition \ref{prop:4}}\label{appendix:proof}

We shall first  find an essentially bounded  mapping $Y : \Omega \to I_0$, which is invariant under $f$, i.e., $f_\omega (Y (\omega)) = Y (\theta \omega)$ $\mathbb P$-almost surely, under the identification of $I_0$ with $[\frac{1}{4} ,\frac{3}{4} ]$ by $\pi _M$. 
Let $L^\infty (\Omega , I_0)$ be the space of measurable mappings $b \in L^\infty (\Omega ,\mathbb R)$ whose essential supremum norm $\Vert b\Vert _{L^\infty}$ is in $I_0$. 
For each $b \in L^\infty (\Omega ,I_0)$, we define a mapping $\mathcal G (b) :\Omega \to I_0$ by
\[
 \mathcal G (b )  (\omega) = f_ {\theta ^{-1}\omega }\left( b (\theta ^{-1} \omega)\right) , \quad \omega \in \Omega.
\]
Then, it is easy to see that $\mathcal G (b)$  is in  $L^\infty(\Omega , I_0)$: Note that $\mathcal G(b)$ is the composition of two measurable mappings $ \omega \mapsto  f_ {\omega }\left( b ( \omega )\right) = f_0\circ b(\omega ) + \epsilon \kappa (\omega )$ 
and $\theta ^{-1}$. 
 (The transformation $\mathcal G :L^\infty  (\Omega ,I_0) \to L^\infty  (\Omega ,I_0)$ is called the \emph{graph transformation} induced by $f$.)
Furthermore, by virtue of \eqref{eq:n1} and \eqref{eq:defoffepsilon2}, 
 we have 
\begin{align*}
\left\Vert \mathcal G (b_1) -\mathcal G (b _2) \right\Vert _{L^\infty}
&= \esssup _{\omega\in\Omega}  \left\vert f_ {\theta ^{-1}\omega }\left( b_1 (\theta ^{-1} \omega)\right) -f_ {\theta ^{-1}\omega }\left( b _2(\theta ^{-1} \omega)\right) \right\vert \\
&= \frac{1}{2} \esssup _{\omega\in\Omega} \left\vert b _1(\theta ^{-1} \omega) - b _2(\theta ^{-1} \omega)\right\vert =
\frac{1}{2}  \Vert b _1 -b _2\Vert _{L^\infty}, 
\end{align*}
for all $ b _1, b _2 \in L^\infty  (\Omega ,I_0)$, 
i.e.,  $\mathcal G$ is a contraction mapping on the complete metric space $L^\infty (\Omega , I_0)$.  Therefore,  there exists a unique fixed point $Y$ of $\mathcal G$. By construction, $Y:\Omega \to I_0$ is an $f$-invariant essentially bounded mapping.

Fix  a continuous mapping $\varphi :M \to \R$. Since $M$ is compact, $\varphi$ is uniformly continuous. 
On the other hand, due to the invariance of $Y$ together with \eqref{eq:n1} and \eqref{eq:defoffepsilon}, we have
\begin{align}\label{eq:x2}
\left\vert 
 f^{(n)}_\omega (x) - 
  Y(\theta ^n\omega )        \right\vert & = 
    \left\vert f^{(n)}_\omega (x) -  f^{(n)}_\omega (Y(\omega ))        \right\vert \\
&= 
 \frac{1}{2^n} \left\vert x -Y(\omega )        \right\vert  \to 0 \notag
\end{align}
as $n$ goes to infinity for $\mathbb P$-almost every  $\omega$ and all $x$ in  $I_0$.
Therefore, a straightforward calculation shows that 
\[
\left\vert 
\frac{1}{n} \sum _{j=0}^{n-1}
 \varphi (f^{(j)}_\omega (x) )- 
 \frac{1}{n} \sum _{j=0}^{n-1}
 \varphi( Y(\theta ^n\omega )  )      \right\vert \to 0
\]
as $n$ goes to infinity for $\mathbb P$-almost every  $\omega$ and all $x$ in  $I_0$.
Since $ \varphi \circ Y$ is an integrable function, we get the conclusion from the fact that $\theta$ is non-historic.

\section*{Acknowledgments}
This work was partially supported by JSPS KAKENHI
Grant Numbers  \color{black} 26400093 \color{black} and 17K05283.

\end{document}